\documentclass[11pt]{amsart}
\usepackage{graphicx}
\usepackage{amscd}
\usepackage{amsmath}
\usepackage{amsxtra}
\usepackage{amsfonts}
\usepackage{amssymb}
\usepackage{xcolor}
\usepackage{mathrsfs}  
\usepackage{leftindex}
\usepackage[euler]{textgreek}
\usepackage{placeins}

\usepackage{tikz-cd}

\newtheorem{theorem}{Theorem}[section]

\newtheorem{lemma}[theorem]{Lemma}
\newtheorem{proposition}[theorem]{Proposition}

\newtheorem{conjecture}[theorem]{Conjecture}
\theoremstyle{definition}
\newtheorem{definition}[theorem]{Definition}

\newtheorem{remark}[theorem]{Remark}

\newtheorem{example}[theorem]{Example}
\theoremstyle{remark}

\renewcommand{\theclaim}{\textup{\theclaim}}

\numberwithin{equation}{section}

\def\openone

{\mathchoice
	
	{\hbox{\upshape \small1\kern-3.3pt\normalsize1}}
	
	{\hbox{\upshape \small1\kern-3.3pt\normalsize1}}
	
	{\hbox{\upshape \tiny1\kern-2.3pt\SMALL1}}
	
	{\hbox{\upshape \Tiny1\kern-2pt\tiny1}}}

\makeatletter

\newbox\ipbox

\newcommand{\ip}[2]{\left\langle #1\, , \,#2\right\rangle}
\newcommand{\diracb}[1]{\left\langle #1\mathrel{\mathchoice
		
		{\setbox\ipbox=\hbox{$\displaystyle \left\langle\mathstrut
				#1\right.$}
			
			\vrule height\ht\ipbox width0.25pt depth\dp\ipbox}
		
		{\setbox\ipbox=\hbox{$\textstyle \left\langle\mathstrut
				#1\right.$}
			
			\vrule height\ht\ipbox width0.25pt depth\dp\ipbox}
		
		{\setbox\ipbox=\hbox{$\scriptstyle \left\langle\mathstrut
				#1\right.$}
			
			\vrule height\ht\ipbox width0.25pt depth\dp\ipbox}
		
		{\setbox\ipbox=\hbox{$\scriptscriptstyle \left\langle\mathstrut
				#1\right.$}
			
			\vrule height\ht\ipbox width0.25pt depth\dp\ipbox}
		
	}\right. }

\newcommand{\dirack}[1]{\left. \mathrel{\mathchoice
		
		{\setbox\ipbox=\hbox{$\displaystyle \left.\mathstrut
				#1\right\rangle$}
			
			\vrule height\ht\ipbox width0.25pt depth\dp\ipbox}
		
		{\setbox\ipbox=\hbox{$\textstyle \left.\mathstrut
				#1\right\rangle$}
			
			\vrule height\ht\ipbox width0.25pt depth\dp\ipbox}
		
		{\setbox\ipbox=\hbox{$\scriptstyle \left.\mathstrut
				#1\right\rangle$}
			
			\vrule height\ht\ipbox width0.25pt depth\dp\ipbox}
		
		{\setbox\ipbox=\hbox{$\scriptscriptstyle \left.\mathstrut
				#1\right\rangle$}
			
			\vrule height\ht\ipbox width0.25pt depth\dp\ipbox}
		
	} #1\right\rangle}

\newcommand{\beq}{\begin{equation}}
	
	\newcommand{\eeq}{\end{equation}}

\newcommand{\Zd}{\mathbb{Z}^d}

\newcommand{\bz}{\mathbb{Z}}

\newcommand{\br}{\mathbb{R}}

\newcommand{\bn}{\mathbb{N}}

\def\blfootnote{\xdef\@thefnmark{}\@footnotetext}

\DeclareMathOperator{\sinc}{sinc}

\renewcommand{\mod}{\operatorname{mod}}

\input xy
\xyoption{all}
\usepackage{amssymb}

\newcommand{\supp}{\operatorname*{supp}}
\newcommand{\norm}[1]{\lvert \lvert#1\rvert \lvert }

\def\R{\mathbb{R}}

\def\-{^{-1}}

\def\ty{\emptyset}

\def\Z{\mathbb{Z}}

\usepackage[OT2,T1]{fontenc}
\DeclareSymbolFont{cyrletters}{OT2}{wncyr}{m}{n}
\DeclareMathSymbol{\Sha}{\mathalpha}{cyrletters}{"58}

\newcommand{\Leb}{\textup{Leb}}

\begin{document}

	\title[Plancherel Identities for unbounded subsets of $\mathbb R^d$]{Plancherel Identities for unbounded subsets of $\mathbb R^d$}

	\author{Piyali Chakraborty}
	\address{[Piyali Chakraborty] University of Central Florida\\
		Department of Mathematics\\
		4000 Central Florida Blvd.\\
		P.O. Box 161364\\
		Orlando, FL 32816-1364\\
		U.S.A.\\} \email{Piyali.Chakraborty@ucf.edu}

	\author{Dorin Ervin Dutkay}
	\address{[Dorin Ervin Dutkay] University of Central Florida\\
		Department of Mathematics\\
		4000 Central Florida Blvd.\\
		P.O. Box 161364\\
		Orlando, FL 32816-1364\\
		U.S.A.\\} \email{Dorin.Dutkay@ucf.edu}

	\subjclass[2010]{47F05,47A10,42A38,52C22}
	\keywords{Spectral set, tile, Plancherel identity}

	\begin{abstract}
		
		We present a class of pairs of subsets of $\mathbb R^d$ for which the Fourier transform, when restricted to these subsets, is an isometric isomorphism, and thus the Plancherel identity is satisfied. The sets are invariant under translations by dual full-rank lattices.

	\end{abstract}
	\maketitle
	\tableofcontents

	\newcommand{\ati}{(A^T)^{-1}}
	\section{Introduction}

Fuglede's conjecture originated in a question posed by Irving Segal concerning the partial differential operators
\[
\frac{1}{2\pi i}\frac{\partial}{\partial x_j}
\]
defined on an open set \(\Omega \subset \mathbb{R}^d\). The question asks: under what conditions do these operators admit commuting self-adjoint extensions on \(L^2(\Omega)\)? In \cite{Fug74}, Fuglede proved that for finite-measure domains $\Omega$ with some regularity restrictions, there are such extensions if and only if there are orthogonal bases of exponential functions $\{e_\lambda: \lambda\in\Lambda\}$ for $L^2(\Omega)$, where $$e_\lambda(x)=e^{2\pi i\lambda\cdot x}.$$ He also proved that even for some nice domains like the disk or a triangle, this condition is {\it not} satisfied. It is satisfied, of course, by the unit cube, with the classical Fourier series. 
	
	\begin{definition}
		\label{defsp}
		A measurable set $\Omega\subset\br^d$ of finite Lebesgue measure is called \textit{spectral} if there exists a set $\Lambda\subset\br^d$ such that the family of exponential functions  
		$$\{e_\lambda : \lambda\in\Lambda\}$$  
		forms an orthogonal basis for $L^2(\Omega)$, $e_\lambda(x)=e^{2\pi i \lambda\cdot x}$. In this case, $\Lambda$ is called a \textit{spectrum} for $\Omega$.
		
		We say that $\Omega$ \textit{tiles} $\br^d$ by translations if there exists a set $\mathcal T\subset\br^d$ such that the translations $\{\Omega+t : t\in\mathcal T\}$ form a partition of $\br^d$ up to measure zero. The set $\mathcal T$ is then called a \textit{tiling set} for $\Omega$, and we say that $\Omega$ \textit{tiles $\br^d$ with the set} $\mathcal T$. 
		
		If only the condition that the sets $\Omega+t$, $t\in\mathcal T$ are mutually disjoint (up to measure zero) is satisfied, (so the union $\cup_t(\Omega+t)$ is not necessarily the whole space $\br^d$), then we say that $\Omega$ {\it packs} with $\mathcal T$.  
	\end{definition}
	
	Fuglede then formulated his conjecture: 
	
	\begin{conjecture}{\bf[Fuglede's Conjecture]}
		\label{conFu}
		A measurable subset $\Omega\subset\br^d$ of finite Lebesgue measure is spectral \textit{if and only if} it tiles $\br^d$ by translations.
	\end{conjecture}

	In the same paper, Fuglede proved that his conjecture is true when the spectrum or the tiling set is a lattice in $\br^d$.

	\begin{theorem}\label{thf1}
		
		Let $\Omega$ be a measurable subset of $\br^d$ of finite measure and let $A$ be some invertible $d\times d$ matrix. Then $\Omega$ tiles $\br^d$ with the lattice $A\bz^d$ if and only if $\Omega$ has spectrum the dual lattice $(A^T)^{-1}\bz^d$.
	\end{theorem}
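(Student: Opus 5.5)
We will prove the statement by comparing two expansions of one function built from $\Omega$. First reduce to $A=I$: the linear change of variables $x\mapsto A^{-1}x$ maps $\Omega$ to $A^{-1}\Omega$. It turns tiling by $A\bz^d$ into tiling by $\bz^d$. It also turns exponentials $e_\lambda$ with $\lambda\in(A^T)^{-1}\bz^d$ into $e_k$, $k\in\bz^d$, since $\lambda\cdot Ax=A^T\lambda\cdot x$. Orthogonality and completeness are preserved, because the map induces a unitary map of $L^2$ spaces up to the constant $|\det A|^{1/2}$. So we may take $\Lambda=\bz^d$ and $\mathcal T=\bz^d$.

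The key object is the periodization $\Phi(x)=\sum_{n\in\bz^d}\chi_\Omega(x+n)$. This is a $\bz^d$-periodic function in $L^1(\mathbb T^d)$ with $\int_{[0,1)^d}\Phi=|\Omega|$. Its Fourier coefficients are
\[
\hat\Phi(k)=\int_{[0,1)^d}\Phi(x)e^{-2\pi i k\cdot x}\,dx=\widehat{\chi_\Omega}(k),\qquad k\in\bz^d .
\]
Tiling by $\bz^d$ means exactly $\Phi=1$ a.e. By uniqueness of Fourier coefficients of $L^1$ functions on the torus, this is equivalent to $\widehat{\chi_\Omega}(k)=\delta_{k,0}$ for all $k$, with $|\Omega|=1$.

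Orthogonality of $\{e_k\}_{k\in\bz^d}$ in $L^2(\Omega)$ means $\langle e_k,e_l\rangle=\widehat{\chi_\Omega}(l-k)=0$ for $k\ne l$, i.e.\ $\widehat{\chi_\Omega}$ vanishes on $\bz^d\setminus\{0\}$. This is equivalent to $\Phi$ being a.e.\ constant, equal to $|\Omega|$. It remains to match completeness with the constant being $1$.

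\textbf{Tiling $\Rightarrow$ spectral.} If $\Phi=1$, then $\Omega$ is a.e.\ a measurable rearrangement of $[0,1)^d$ by integer translations. The map $f\mapsto\tilde f$, $\tilde f(x)=f(x+n)$ for $x+n\in\Omega$, is unitary from $L^2(\Omega)$ onto $L^2([0,1)^d)$. It sends $e_k$ to $e_k$ because $e_k$ is $\bz^d$-periodic. Completeness of the classical Fourier basis on the cube then gives completeness on $\Omega$.

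\textbf{Spectral $\Rightarrow$ tiling.} Orthogonality gives $\Phi\equiv c=|\Omega|$ a.e., and $c$ is a.e.\ an integer since $\Phi$ is integer-valued. We must show $c=1$, and this is the step I expect to be the main obstacle.

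First, $c\ge1$, because $|\Omega|>0$ (a set of measure zero has $L^2(\Omega)=0$ and no basis). Suppose $c\ge2$. Then a.e.\ $x$ in the cube has at least two preimages $x+n_1\ne x+n_2$ in $\Omega$. Choose measurably a set $E\subset[0,1)^d$ of positive measure and distinct $n_1,n_2$ with $E+n_1,E+n_2\subset\Omega$; this is possible since there are countably many pairs. Define $f=\chi_{E+n_1}-\chi_{E+n_2}$. Then
\[
\langle f,e_k\rangle=\int_E e^{-2\pi ik\cdot x}\bigl(e^{-2\pi ik\cdot n_1}-e^{-2\pi ik\cdot n_2}\bigr)dx=0
\]
for all $k$, because the bracket vanishes for integer $k$. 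So $f$ is a nonzero function orthogonal to every $e_k$, contradicting completeness. Hence $c=1$, $\Phi=1$ a.e., and $\Omega$ tiles with $\bz^d$.

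Undoing the change of variables gives the statement for general $A$.
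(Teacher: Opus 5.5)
The paper gives no proof of this statement: it is quoted from Fuglede \cite{Fug74} and used as a black box (in the proof of Theorem \ref{th2} and in Examples \ref{exfu}, \ref{exsa}). Your argument is correct and self-contained, so the useful comparison is with the route the paper's own lemmas would suggest.

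Each step of your argument holds. The reduction to $A=I$ is fine. The identity $\hat\Phi(k)=\widehat{\chi_\Omega}(k)$ holds, with Fubini justified since $\Phi\in L^1([0,1)^d)$. Orthogonality is indeed equivalent to $\Phi=|\Omega|$ a.e. The transport of the classical basis through the integer rearrangement is exactly the congruence of Lemma \ref{lemcg}. In the last step you can take concretely $E=[0,1)^d\cap(\Omega-n_1)\cap(\Omega-n_2)$. Since $E\subseteq[0,1)^d$, the sets $E+n_1$ and $E+n_2$ are disjoint, so $\|f\|^2=2|E|>0$.

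Your route differs from the paper's tools in how $c=1$ is forced. With Lemma \ref{lemma1} applied to $\Leb_\Omega$, spectrality gives $\sum_{k\in\mathbb Z^d}|\widehat{\chi_\Omega}(t+k)|^2=|\Omega|^2$. Integrating over $t\in[0,1)^d$ and using Plancherel then gives $|\Omega|=|\Omega|^2$, hence $|\Omega|=1$. This is the same mechanism as Lemma \ref{lemc}, where a normalization constant is pinned down by Plancherel.

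Your explicit test function $\chi_{E+n_1}-\chi_{E+n_2}$ avoids Plancherel on $\mathbb R^d$ and shows a bit more. Orthogonality alone says that $\Omega$ multi-tiles with $\mathbb Z^d$ at level $|\Omega|$. Completeness alone already forces $\Omega$ to pack, i.e.\ $\Phi\le 1$ a.e. So the two halves of the basis property are cleanly separated in your proof.
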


	The conjecture was later shown to be false in dimensions $d\ge3$ \cite{Tao04, FMM06}, but it is true for convex domains \cite{LM22}.
	
		In \cite{Ped87}, Steen Pedersen extended Fuglede’s result by removing both the regularity condition and the finite-measure assumption. Naturally, the definition of a spectral set had to be adapted for infinite-measure domains (since $e_\lambda$ is not square-integrable in that case).
	
	\begin{definition}\label{defp1}
		For a function $f\in L^1(\br^d)$, define its classical Fourier transform as  
		$$\hat f(t)=\int_{\br^d}f(x)e^{-2\pi it\cdot x}\,dx,\quad(t\in\br^d).$$
		
		Let $\Omega\subset\br^d$ be measurable and let $\mu$ be a positive Radon measure on $\br^d$. We say that $(\Omega,\mu)$ is a \textit{spectral pair} if:  
		(1) for each $f\in L^1(\Omega)\cap L^2(\Omega)$, the continuous function $t\mapsto \hat f(t)$ satisfies $\int|\hat f|^2\,d\mu<\infty$; and  
		(2) the mapping $f\mapsto \hat f$ from $L^1(\Omega)\cap L^2(\Omega)\subset L^2(\Omega)$ into $L^2(\mu)$ is isometric and has dense range.
		
		This map then extends by continuity to an isometric isomorphism  
		$$\mathscr F:L^2(\Omega)\to L^2(\mu).$$
		
	The isometry property yields a Plancherel identity obtained by taking the Fourier transform of functions in $L^2(\Omega)$, restricting it to the support of the measure $\mu$ and integrating the absolute value squared of this restriction of the Fourier transform against the measure $\mu$.
		
		The set $\Omega$ is called \textit{spectral} if there exists a measure $\mu$ such that $(\Omega,\mu)$ forms a spectral pair; $\mu$ is then called a \textit{pair measure} or a {\it dual measure} for $\Omega$. When $\Omega$ has finite measure, this definition coincides with the earlier one \cite[Corollary 1.11]{Ped87}, and the pair measure $\mu$ is the counting measure on the spectrum $\Lambda$.
	\end{definition}
	
	Denote by $\Leb_\Omega$ the Lebesgue measure restricted to a subset $\Omega$ of $\br^d$. In \cite{CD26}, we presented a class of unbounded sets in $\br$ that are spectral:
	
	\begin{theorem}\label{thm} \cite[Theorem 1.8]{CD26}
		Let $\Omega$ be an open subset of $\br$ and let $a\in\bn$, $a\geq 2$. Then $\Omega$ tiles $\br$ by $\{0,1,\dots,a-1\}$ if and only if $\Omega$ is a spectral set with pair measure $\mu$ equal to a renormalized Lebesgue measure on $\left[-\frac1{2a},\frac{1}{2a}\right]+\bz$, $\mu=a\Leb_{\left[-\frac1{2a},\frac1{2a}\right]+\bz}$ .
	\end{theorem}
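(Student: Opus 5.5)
Set $P=[-\frac1{2a},\frac1{2a}]$, so the proposed pair measure is $\mu=a\,\Leb_{P+\bz}$. The plan is to reduce the Plancherel identity on $\Omega$ to a finite-dimensional identity, via periodization in $x$ and a decomposition of $\br$ into fibers in the dual variable. The tiling hypothesis enters only as a statement about fiber counts, so both directions of the equivalence will come out of one computation.

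\emph{Periodization.} For $f\in L^1(\Omega)\cap L^2(\Omega)$, extend $f$ by zero and split $x=n+y$ with $n\in\bz$, $y\in[0,1)$. For $t=\xi+k$ with $\xi\in P$, $k\in\bz$, set $F_y(n)=f(y+n)$. Then
$$\hat f(\xi+k)=\int_0^1 e^{-2\pi i(\xi+k)y}\,\widehat{F_y}(\xi)\,dy, \qquad \widehat{F_y}(\xi)=\sum_{n} F_y(n)e^{-2\pi i \xi n}.$$
For fixed $\xi$, summing $|\hat f(\xi+k)|^2$ over $k$ is Parseval for Fourier series on $[0,1)$ applied to $y\mapsto e^{-2\pi i\xi y}\widehat{F_y}(\xi)$. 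This gives
$$\int|\hat f|^2\,d\mu=a\int_P\int_0^1|\widehat{F_y}(\xi)|^2\,dy\,d\xi .$$

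\emph{The fiber computation.} Exchange the order of integration. The identity now reduces to a statement about each fiber $\Omega_y=\{n\in\bz: y+n\in\Omega\}$. We need
$$a\int_P\Bigl|\sum_{n\in\Omega_y}c_n e^{-2\pi i\xi n}\Bigr|^2 d\xi=\sum_{n\in\Omega_y}|c_n|^2$$
for all finitely supported sequences $(c_n)$ on $\Omega_y$, for a.e. $y$. Surjectivity requires in addition that these exponentials span $L^2(P)$.

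So the requirement is that $\{e_n:n\in\Omega_y\}$ be an orthonormal basis of $L^2(P,a\,d\xi)$. Since $P$ has length $1/a$, its spectra include the sets $a\bz+j$. By Fuglede's Theorem \ref{thf1} in dimension one, or by a direct computation, an orthonormal set $\{e_n\}$ with $n\in\bz$ in this space must have pairwise differences in $a\bz\setminus\{0\}$. Such a set is a basis exactly when it is a full coset $a\bz+j$.

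\emph{The tiling condition.} $\Omega$ tiles by $\{0,\dots,a-1\}$ exactly when, for a.e. $u\in[0,a)$, there is exactly one $j\in\{0,\dots,a-1\}$ with $u-j$ in the $a$-periodized picture of $\Omega$. In terms of fibers, for a.e. $y$, $\Omega_y$ meets each residue class modulo $a$ in a prescribed way. I expect this translation to be the main obstacle. Tiling by $\{0,\dots,a-1\}$ pairs $x$ with $x+1,\dots,x+a-1$, whereas the spectral condition pairs $x$ with $x+a\bz$, so the two indexings must be reconciled. I would first test this on examples such as $\Omega=[0,1)+a\bz$ and aim to show that both conditions are equivalent to: for a.e. $y$, $\Omega_y$ is a complete residue system modulo $a$ intersected appropriately. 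If the matching fails, I would adjust which fiber decomposition is used, splitting $x$ by $a\bz$ instead of $\bz$.

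\emph{Conclusion and converse.} Given the fiber characterization, isometry follows, and density of the range comes from the fiberwise basis property. For the converse, assume the Plancherel identity. Testing it on functions localized near a point $y$, which is possible because $\Omega$ is open, forces the fiberwise orthonormal basis property. That in turn forces the tiling.
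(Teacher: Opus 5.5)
Your periodization is correct, and the fiber reduction it gives is sound. The map $f\mapsto(F_y)_{y\in[0,1)}$ is unitary from $L^2(\Omega)$ onto $\int^\oplus_{[0,1)}\ell^2(\Omega_y)\,dy$. After multiplying by the unimodular factor $e^{-2\pi i\xi y}$, the Fourier map becomes the decomposable operator $\int^\oplus T_y\,dy$ with $T_yc=\sum_{n\in\Omega_y}c_ne_{-n}|_P$. So $\mu$ is a pair measure exactly when $\{e_n:n\in\Omega_y\}$ is an orthonormal basis of $L^2(P,a\,d\xi)$ for a.e. $y$, i.e. when $\Omega_y=j(y)+a\bz$ for a.e. $y$. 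For the converse you need neither localized test functions nor openness; the direct-integral structure already gives the a.e. fiberwise statement.

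The gap is the step you flagged as the main obstacle and left unresolved, and the target you propose for it is wrong. ``$\Omega_y$ is a complete residue system modulo $a$'' is not the relevant condition. For example, $\{0,1,\dots,a-1\}$ is one, but it neither tiles $\bz$ with $\{0,\dots,a-1\}$ nor gives orthogonal exponentials on $P$. What is missing are two elementary facts.

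First, the tiling set consists of integers, so it preserves each coset $y+\bz$. Hence $\Omega\oplus\{0,\dots,a-1\}=\br$ if and only if $\Omega_y\oplus\{0,\dots,a-1\}=\bz$ for a.e. $y\in[0,1)$, as in the proof of Theorem \ref{thper}.

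Second, a set $\mathcal T\subseteq\bz$ satisfies $\mathcal T\oplus\{0,\dots,a-1\}=\bz$ if and only if $\mathcal T=j+a\bz$ for some $j$. Indeed, if $n\in\mathcal T$, then $n+1,\dots,n+a-1\notin\mathcal T$, since those points are already covered by $n$. So the element of $\mathcal T$ covering $n+a$ must be $n+a$ itself, and the same argument works downward. This is exactly the argument the paper uses to show $\Omega+a=\Omega$.

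With these two facts, your $\bz$-fiber decomposition matches the spectral condition exactly, and there is no need to switch to an $a\bz$-decomposition.

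With that lemma supplied, your proof is a genuinely different route from the paper's. The paper first derives $\Omega+a=\Omega$ directly from the tiling and writes $\Omega=\Omega_0\oplus a\bz$ with $\Omega_0=\Omega\cap(0,a)$. It then applies Theorem \ref{th2}, hence Theorem \ref{th1}. That proof gets the isometry from Riemann sums over the spectra of the finite unions $\Omega_0+a\{-n,\dots,n\}$ (Lemma \ref{lemco}), and surjectivity from a distributional formula for $\hat\chi_\Omega$. Only the implication from tiling to spectrality is actually derived there. Your Zak-transform argument is self-contained and does not use that $\Omega$ is open. It also yields both implications at once: the $a$-periodicity of $\Omega$ comes out a posteriori from $\Omega_y=j(y)+a\bz$ rather than being needed as input.
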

	
	In this paper we extend this theorem to higher dimensions.  Our main results are
	
	\begin{theorem}\label{th1}
		Let $\Omega_0$ be a bounded open set in $\br^d$ of finite measure, let $\Lambda_0$ be a discrete subset of $\br^d$, and let $A$ be some invertible $d\times d$ matrix. Assume in addition that 
		\begin{equation}
			\label{eq1.1}
			\mbox{The set $\Omega_0$ packs with the lattice $A\bz^d$,}
		\end{equation}
		and 
		\begin{equation}
			\label{eq1.2}
			\mbox{$\lambda_{0}\cdot Ak\in\Z$ for all $\lambda_{0}\in\Lambda_{0}$ and $k\in\Zd$, equivalently $\Lambda_0\subseteq (A^T)^{-1}\bz^d$.}
		\end{equation}

		The following statements are equivalent:
		\begin{enumerate}
			\item $\Omega_0$ has spectrum $\Lambda_0$.
			\item The set $$\Omega=\Omega_0+A\bz^d$$ is spectral with pair measure 	
			$$\mu:=\frac{|\det A|}{|\Omega_{0}|}\Leb_{\ati[-\frac{1}{2},\frac{1}{2}]^{d}+\Lambda_{0}}.$$
			\item For the set $\Omega=\Omega_0+A\bz^d$, the restriction of the Fourier transform $$L^2(\Omega)\ni f\to \hat f|_{\ati[-\frac{1}{2},\frac{1}{2}]^{d}+\Lambda_{0}}\in L^2(\mu)$$ is an isometry.
		\end{enumerate}

	\end{theorem}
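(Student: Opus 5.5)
Throughout, write $B=(A^T)^{-1}$, $Q=B[-\tfrac12,\tfrac12]^d$ (a fundamental domain for the lattice $B\bz^d$, of measure $1/|\det A|$), and $\Gamma=A\bz^d$. The plan is to decompose $L^2(\Omega)$ along the lattice $\Gamma$ and compute $\hat f$ on each translate $Q+\lambda_0$ by a Fourier series in the $\Gamma$-variable.

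\textbf{Reduction to a Fourier series.} Since $\Omega_0$ packs with $\Gamma$, every $f\in L^2(\Omega)$ can be written uniquely as $f(x+Ak)=f_k(x)$ for $x\in\Omega_0$, $k\in\bz^d$, with $\|f\|^2=\sum_k\|f_k\|^2_{L^2(\Omega_0)}$. First take $f$ with only finitely many nonzero $f_k$; such $f$ are dense. For $\xi\in Q$ and $\lambda_0\in\Lambda_0$, hypothesis \eqref{eq1.2} gives $e^{-2\pi i\lambda_0\cdot Ak}=1$, hence
\[
\hat f(\xi+\lambda_0)=\sum_k e^{-2\pi i \xi\cdot Ak}\,\widehat{f_k}(\xi+\lambda_0)=\int_{\Omega_0}F(\xi,x)e^{-2\pi i(\xi+\lambda_0)\cdot x}\,dx,
\]
where $F(\xi,x)=\sum_k f_k(x)e^{-2\pi i\xi\cdot Ak}$. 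The functions $\xi\mapsto e^{-2\pi i\xi\cdot Ak}$, $k\in\bz^d$, form an orthogonal basis of $L^2(Q)$, each of norm squared $|Q|=1/|\det A|$. It follows that
\[
\int_Q\|F(\xi,\cdot)\|^2_{L^2(\Omega_0)}\,d\xi=\frac{1}{|\det A|}\|f\|^2 .
\]

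\textbf{(1)$\Rightarrow$(2).} Assume $\Lambda_0$ is a spectrum for $\Omega_0$. Then for every $g\in L^2(\Omega_0)$ and every $\xi$,
\[
\sum_{\lambda_0}\Big|\int_{\Omega_0} g\,e^{-2\pi i(\xi+\lambda_0)\cdot x}\,dx\Big|^2=|\Omega_0|\,\|g\|^2,
\]
because $\{e_{\lambda_0+\xi}\}$ is again an orthogonal basis, obtained by multiplying by the unimodular function $e_\xi$. The translates $Q+\lambda_0$ are disjoint up to null sets, since $\Lambda_0\subseteq B\bz^d$ and $Q$ tiles with $B\bz^d$. Hence
\[
\int|\hat f|^2\,d\mu=\frac{|\det A|}{|\Omega_0|}\int_Q\sum_{\lambda_0}|\hat f(\xi+\lambda_0)|^2\,d\xi=|\det A|\int_Q\|F(\xi,\cdot)\|^2\,d\xi=\|f\|^2 .
\]
This is the isometry. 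Density then extends it to all of $L^2(\Omega)$; for general $f\in L^1\cap L^2$, approximate by truncations in $k$, using continuity of $\hat f$ and Fatou's lemma.

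For dense range, reverse the construction. Given $h\in L^2(\mu)$, for a.e. $\xi\in Q$ the sequence $(h(\xi+\lambda_0))_{\lambda_0}$ lies in $\ell^2(\Lambda_0)$. Completeness of $\{e_{\lambda_0}\}$ then yields $F(\xi,\cdot)\in L^2(\Omega_0)$ whose Fourier coefficients against $e_{\xi+\lambda_0}$ are these values. Measurability in $\xi$ follows from the explicit formula $F(\xi,\cdot)=|\Omega_0|^{-1}\sum_{\lambda_0} h(\xi+\lambda_0)e_{\xi+\lambda_0}$. Expanding $F$ in the basis $e^{-2\pi i\xi\cdot Ak}$ recovers the $f_k$. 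So the map is onto, giving (2), and (2)$\Rightarrow$(3) is trivial.

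\textbf{(3)$\Rightarrow$(1).} Apply the isometry to $f(x+Ak)=c_k\,g(x)$ with $g\in L^2(\Omega_0)$ and $(c_k)$ finitely supported. Then $F(\xi,x)=\phi(\xi)g(x)$, where $\phi(\xi)=\sum_k c_k e^{-2\pi i\xi\cdot Ak}$ is an arbitrary trigonometric polynomial. The isometry gives
\[
\frac{|\det A|}{|\Omega_0|}\int_Q|\phi(\xi)|^2\,S_g(\xi)\,d\xi=|\det A|\,\|g\|^2\int_Q|\phi|^2\,d\xi,
\qquad S_g(\xi)=\sum_{\lambda_0}|\langle g,e_{\xi+\lambda_0}\rangle|^2 .
\]
Since the $|\phi|^2$ are dense enough (Fejér kernels), this forces $S_g(\xi)=|\Omega_0|\,\|g\|^2$ for a.e. $\xi$.

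\textbf{Main obstacle.} The key step is passing from this a.e. statement to $\xi=0$. Since $\Omega_0$ is bounded, $\langle g,e_{\xi+\lambda_0}\rangle$ is continuous in $\xi$. One inequality is then immediate: by Fatou, $S_g(0)\le\liminf_{\xi\to0}S_g(\xi)=|\Omega_0|\|g\|^2$. For the reverse inequality I would use a polarization and frame argument instead of continuity. The a.e. identity makes $\{|\Omega_0|^{-1/2}e_{\xi+\lambda_0}\}_{\lambda_0}$ a Parseval frame for a.e. $\xi$, and testing with $g=e_{\xi+\lambda_0'}$ shows these vectors are unit norm. A unit-norm Parseval frame is an orthonormal basis, so $\Lambda_0+\xi$, and hence $\Lambda_0$, is a spectrum of $\Omega_0$ (orthogonality of $e_{\lambda_0}-e_{\lambda_0'}$ is translation invariant). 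It is enough to apply this at a single good $\xi$, after restricting $g$ to a countable dense set so that one full-measure set of $\xi$ works for all $g$ simultaneously.
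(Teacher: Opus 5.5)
Your proof is correct. For (1)$\Rightarrow$(2) it takes a genuinely different route from the paper.

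\textbf{The paper's route for (1)$\Rightarrow$(2).} The paper proves the isometry by exhausting $\Omega$ with $\Omega_n=\Omega_0+A\{-n,\dots,n\}^d$. Lemma \ref{lemco} gives these sets the spectra $\frac{1}{2n+1}(A^T)^{-1}\{-n,\dots,n\}^d+\Lambda_0$. Parseval's identity for $f\in C_c^\infty(\Omega)$ is then read as a Riemann sum for $\int|\hat f|^2\,d\mu$. Surjectivity is a separate computation: the paper uses the distributional Fourier transform of the $A\bz^d$-periodic function $\chi_\Omega$ to produce explicit preimages of test functions supported in a single cube $(A^T)^{-1}[-\frac12,\frac12]^d+k_0$.

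\textbf{Your route for (1)$\Rightarrow$(2).} You use a Zak-type fiber decomposition along $A\bz^d$:
\begin{itemize}
\item $f\mapsto F(\xi,\cdot)$ identifies $L^2(\Omega)$ with $L^2(Q;L^2(\Omega_0))$;
\item restriction to the disjoint cubes $Q+\lambda_0$ identifies $L^2(\mu)$ with $L^2(Q;\ell^2(\Lambda_0))$;
\item under these identifications $\mathscr F$ is the direct integral of the analysis operators $g\mapsto(\langle g,e_{\xi+\lambda_0}\rangle)_{\lambda_0}$.
\end{itemize}
Isometry and surjectivity then both reduce to Parseval on each fiber. You need no Lemma \ref{lemco}, no Riemann sums, no distributions, and no boundedness or openness of $\Omega_0$; finite measure suffices. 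The theorem becomes transparent: $\mathscr F$ is isometric iff almost every fiber $\{|\Omega_0|^{-1/2}e_{\xi+\lambda_0}\}_{\lambda_0}$ is a Parseval frame, iff $\Lambda_0$ is a spectrum. What the paper's route offers in exchange is an explanation of where $\mu$ comes from: it is the limit of the renormalized counting measures on the spectra of the finite unions $\Omega_n$.

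\textbf{(3)$\Rightarrow$(1).} You use the same Fourier-coefficient idea as the paper, which works only with $g=\chi_{\Omega_0}$ and its modulations, but your ending is cleaner. The paper passes the identity $\sum_{\lambda_0}|\hat\chi_{\Omega_0}(t+\lambda_0)|^2=|\Omega_0|^2$ to Lemma \ref{lemma1}, which evaluates it at the specific points $t=-\lambda_0$. The Fourier-coefficient argument, however, only delivers that identity for a.e.\ $t$. Your unit-norm Parseval frame at a single generic $\xi$ avoids this a.e.-versus-everywhere issue entirely.

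\textbf{Two details to write out.}
\begin{itemize}
\item At the fixed good $\xi$, extend the frame identity from the countable dense set to all of $L^2(\Omega_0)$. Fatou bounds the analysis operator, and continuity then finishes it.
\item For surjectivity, note that the fiber formula $\mathscr Ff(\xi+\lambda_0)=\langle F(\xi,\cdot),e_{\xi+\lambda_0}\rangle$ persists on all of $L^2(\Omega)$ by continuity of both sides.
\end{itemize}
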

	
	As a fairly easy corollary, we obtain a generalization of Theorem \ref{thm}:
	
	\begin{theorem}
		\label{th2}
		Let $\Omega_0$ be an open subset of $\br^d$, let $A$ be an invertible $d\times d$ integer matrix, and let $\mathcal R$ be a complete set of representatives $\mod A\bz^d$. Assume in addition that the sets $\Omega_0$ packs with the lattice $A\bz^d$, and  define the set 
		$$\Omega:=\Omega_0+A\bz^d.$$

		The following statements are equivalent:
		\begin{enumerate}
			\item $\Omega$ tiles $\br^d$ with $\mathcal R$.
			\item $\Omega_0$ tiles $\br^d$ with the lattice $\bz^d$ and hence $|\Omega_0|=1$.
			\item $\Omega$ is spectral with pair measure 
			$$\mu:=\frac{|\det A|}{|\Omega_0|}\Leb_{\ati[-\frac12,\frac12]^d+\bz^d}.$$
			\item The Fourier transform $f\to \hat f$, from $L^1(\Omega)\cap L^2(\Omega)$ into $L^2(\mu)$ (with $\mu$ as above) extends to an isometry from $L^2(\Omega)$ into $L^2(\mu)$.
		\end{enumerate}
	\end{theorem}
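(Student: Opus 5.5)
The plan is to derive Theorem \ref{th2} from Theorem \ref{th1} with $\Lambda_0=\bz^d$, after first settling the purely geometric equivalence (1)$\Leftrightarrow$(2). Since $A$ is an integer matrix, $A^T$ is too, so $A^T\bz^d\subseteq\bz^d$ and therefore $\bz^d\subseteq (A^T)^{-1}\bz^d$. Thus hypothesis \eqref{eq1.2} holds for $\Lambda_0=\bz^d$, and \eqref{eq1.1} is assumed. The one gap is that Theorem \ref{th1} asks for $\Omega_0$ to be bounded. Because $\Omega_0$ packs with $A\bz^d$, we have $|\Omega_0|\le|\det A|<\infty$. I expect to check that the proof of Theorem \ref{th1} only uses finite measure and packing; if boundedness is really needed, I would replace $\Omega_0$ by a bounded fundamental piece.

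For (1)$\Leftrightarrow$(2), I would use that $\mathcal R$ is a complete set of representatives of $\bz^d/A\bz^d$, so $\mathcal R+A\bz^d=\bz^d$ with every element of $\bz^d$ represented uniquely. Since the translates $\Omega_0+Ak$ are disjoint, we have
$$\sum_{r\in\mathcal R}\openone_{\Omega+r}=\sum_{r\in\mathcal R}\sum_{k\in\bz^d}\openone_{\Omega_0+Ak+r}=\sum_{n\in\bz^d}\openone_{\Omega_0+n}\quad\text{a.e.}$$
Hence $\Omega$ tiles with $\mathcal R$ exactly when $\Omega_0$ tiles with $\bz^d$. Tiling by $\bz^d$ forces $|\Omega_0|=1$, by integrating over $[0,1)^d$. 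Note that the packing of $\Omega_0$ by $A\bz^d$ is what makes $\Omega$ a disjoint union, so the multiplicities match.

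Next, (2)$\Leftrightarrow$(1 of Theorem \ref{th1}) is Fuglede's Theorem \ref{thf1} with the identity matrix: $\Omega_0$ tiles with $\bz^d$ if and only if $\Omega_0$ has spectrum $\bz^d$. Applying Theorem \ref{th1} with $\Lambda_0=\bz^d$ then shows that $\Omega_0$ having spectrum $\bz^d$ is equivalent to $\Omega$ being spectral with the stated $\mu$, which is (3). It is also equivalent to the isometry property. For (4), the map defined on $L^1\cap L^2$ and extended by continuity is the same as the map $f\mapsto\hat f|_{(A^T)^{-1}[-\frac12,\frac12]^d+\bz^d}$ on $L^2(\Omega)$, because $\mu$ is supported on that set. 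So (4) matches item (3) of Theorem \ref{th1}. This gives (2)$\Rightarrow$(3)$\Rightarrow$(4)$\Rightarrow$(2).

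The main obstacle is (4)$\Rightarrow$(2). Theorem \ref{th1} gives that the isometry forces $\Omega_0$ to have spectrum $\bz^d$, but it does so under its hypotheses, including the packing and the normalization $|\det A|/|\Omega_0|$. I must make sure the boundedness hypothesis really can be dropped there. I would first try to reprove the needed implication directly. Take $f=\openone_{\Omega_0}\,e_{\lambda}$-type test functions, or better $f$ supported in $\Omega_0$. Periodize $|\hat f|^2$ over the fundamental domain $(A^T)^{-1}[-\frac12,\frac12]^d$ of the dual lattice, using that $\bz^d$ is a union of cosets relative to $(A^T)^{-1}\bz^d$. The isometry then reduces to the Parseval identity $\sum_{n\in\bz^d}|\hat f(n+\xi)|^2=|\Omega_0|\,\|f\|^2$ for almost every $\xi$, which is exactly the statement that $\bz^d$ is a spectrum for $\Omega_0$. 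This argument needs only finite measure.
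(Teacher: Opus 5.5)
Your proposal is correct and follows the paper's proof: (1)$\Leftrightarrow$(2) via $\mathcal R\oplus A\bz^d=\bz^d$ together with packing (the paper's Proposition \ref{pr2} and Lemma \ref{lemcg}), and the remaining equivalences via Fuglede's Theorem \ref{thf1} combined with Theorem \ref{th1} for $\Lambda_0=\bz^d$. You are right to flag the boundedness hypothesis of Theorem \ref{th1}, which the paper passes over silently; its proof only uses that $\Omega_0$ is open (to fit a compact support inside some $\Omega_0+A\{-n,\dots,n\}^d$), that it has finite measure (which follows from packing, as you note), and that it packs with $A\bz^d$, so the application is legitimate and your fallback direct argument is not needed.
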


	Note that our sets $\Omega$ are invariant under translations by elements from the lattice $A\bz^d$; in other words, any element of the lattice $A\bz^d$ is a period for $\Omega$ (see Definition \ref{defp}). In section \ref{sec4}, we show that, in dimension 1, if a set $\Omega$ tiles with a finite set, then $\Omega$ has to be periodic (Theorem \ref{thper}). If $\Omega$ tiles $\br$ by $\{0,1,\dots, a-1\}$ for some integer $a\geq 2$, then $a$ is a period for $\Omega$ (see the Proof of Theorem \ref{thm}). Also, if the cardinality of the tiling set $\mathcal T$ is a prime number $p$, and the greatest common divisor of the elements of $\mathcal T$ is 1, then $\Omega$ has period $p$ (Theorem \ref{thinv}).

	The paper is structured as follows: in Section \ref{sec2}, we introduce some definitions, notations, and some lemmas needed for the proofs of our main results; see in particular Lemma \ref{lemco} and Lemma \ref{lemc}, which might be of independent interest. In Section \ref{sec3}, we present the proofs of our main results. In Section 4, we show that, in dimension 1, the subsets of $\br$ that tile with a finite set have periodicity properties. In Section \ref{sec5}, we present some examples, and we make connections to fundamental domains of lattices and self-affine tiles. 
	
	\section{Preliminaries}\label{sec2}

	We begin with some  definitions and notations.

	For a measurable set $\Omega_0$, $|\Omega_0|$ denotes its Lebesgue measure. For a finite set $F$, $|F|$ denotes its cardinality. 
	
	For a finite or countable set $F$ in $\br^d$, we denote by $\Sha_F$ the Dirac comb on $F$, i.e., the counting measure on $F$. 
	
	When $F$ is finite, we say that $F$ has spectrum $\Lambda$ if the measure $\Sha_F$ has spectrum $\Lambda$, see Definition \ref{defsm} below. 
	
	For a Lebesgue measurable set $E$ in $\br^d$, we denote by $\Leb_E$ the restriction of the Lebesgue measure to the set $E$.
	
	For two finite or countable sets $F_1,F_2$ in $\br^d$ we use the notation $F_1\oplus F_2$ to indicate that each point $\lambda$ in $F_1+F_2$ can be written {\it uniquely} as $\lambda=f_1+f_2$ with $f_1\in F_1$ and $f_2\in F_2$.
	
	If $\Omega$ is a measurable subset in $\br^d$ and $\mathcal T$ is discrete, we write $\Omega\oplus\mathcal T$ to indicate that the sets $\Omega+t$, $t\in\mathcal T$ are disjoint, up to measure zero. In other words, for a.e., $y\in\Omega+\mathcal T$ there exist {\it unique} $x\in\Omega$ and $t\in\mathcal T$ such that $y=x+t$.
	
	For a full-rank lattice $\Gamma$ in $\br^d$, a {\it fundamental domain} is a measurable subset $F$ of $\br^d$ that tiles $\br^d$ with the lattice $\Gamma$.

	\begin{definition}
		\label{deft2}
		Let $\Omega_0$ and $\Omega$ be measurable subsets of $\br^d$ and $\mathcal T$ a discrete subset of $\br^d$. We say that $\Omega_0$ {\it tiles }$\Omega$ by $\mathcal T$ if $\Omega_0\oplus\mathcal  T=\Omega$ (up to Lebesgue measure zero). 
		
		Let $B_0$ and $B$  be subsets of a countable Abelian group $G$ and $\mathcal T\subseteq G$. We say that $B_0$ {\it tiles} $B$ by $\mathcal T$ if $B_0\oplus\mathcal T=B$.
	\end{definition}

	\begin{definition}
		\label{defp}
		A subset $B$ in a countable Abelian group $G$ is called {\it periodic} if there exists an element $g\in G$, $g\neq e$ such that $g+B=B$; in this case $g$ is called a {\it period} for $B$. The periods of $B$ together with identity element $e$ form a subgroup of $G$ called the {\it subgroup of periods of }$B$. 
		
		A measurable subset $\Omega$ of $\br^d$ is called {\it periodic} if there exists an element $p\in\br^d$, $p\neq 0$, such that $\Omega+p=\Omega$, up to measure zero. If $A$ is a $d\times d$ invertible real matrix,  we say that $\Omega$ has period lattice $A\bz^d$ if $Ak$ is a period for $\Omega$ for all $k\in\bz^d\setminus \{0\}$. 
		
		For a measurable subset $\Omega$ of $\br^d$ and some countable subset $\Gamma$ of $\br^d$, we say that $\Omega$ is {\it $\Gamma$-invariant} if $\Omega+\gamma\subseteq \Omega$ for all $\gamma\in\Gamma$, up to measure zero.

			For an invertible $d\times d$ integer matrix $A$, a {\it complete set of representatives (or residues) }$\mod A\bz^d$, is a set $\mathcal R$ contained in $\bz^d$ such that for any $k\in\bz^d$, there exists a unique $r\in\mathcal R$ such that $k-r\in A\bz^d$.

		Let $A$ be an invertible $d\times d$ real matrix. We say that two measurable subsets $E$ and $F$ of $\br^d$ are {\it congruent} $\mod A\bz^d$, if there exist partitions $\{E_k: k\in\bz^d\}$ of $E$, and $\{F_k : k\in \bz^d\}$ of $F$ such that 
		$$F_k=E_k+Ak,\quad( k\in\bz^d),$$
		up to measure zero.

	\end{definition}

	For a measurable set $\Omega$ in $\br^d$ and a positive Radon measure $\mu$ on $\br^d$, we denote by $\mathscr F_{\Omega,\mu}$ the restriction the map from $L^1(\Omega)\cap L^2(\Omega)$ into $L^2(\mu)$, defined by the Fourier transform $$\mathscr F_{\Omega,\mu}f=\hat f,$$ as long as it is well-defined.

		\begin{definition}\label{defsm}
		Let $\mu$ be a finite positive Radon measure on $\br^d$. We say that $\mu$ is a {\it spectral measure}, if there exists a subset $\Lambda$ in $\br^d$ such that the family of exponential functions 
		$$\{e_\lambda : \lambda\in\Lambda\}$$
		forms an orthogonal basis for the Hilbert space $L^2(\mu)$. In this case, $\Lambda$ is called a {\it spectrum} for the measure $\mu$.
	\end{definition}
	
 The next lemma is well known, see e.g. \cite{JP98a}.
		\begin{lemma}
		If $\mu $ is a finite positive Radon measure on $\mathbb{R}^{d}$ with total measure $\mu(\mathbb{R}^{d})=m$, then $\mu$ has spectrum $\Lambda$ if and only if $$\sum_{\lambda\in \Lambda}|\hat{\mu}(t+\lambda)|^{2}=m^{2}.$$ 
		\label{lemma1}
	\end{lemma}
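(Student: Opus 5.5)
The argument goes through Parseval's identity for the candidate orthogonal system $\{e_\lambda:\lambda\in\Lambda\}$ in $L^2(\mu)$, tested against the exponentials themselves. Write $\delta_{\lambda,\lambda'}$ for the Kronecker symbol.

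\emph{Orthonormality.} The functions $e_\lambda/\sqrt m$, $\lambda\in\Lambda$, are orthonormal in $L^2(\mu)$ if and only if $\langle e_\lambda,e_{\lambda'}\rangle_{L^2(\mu)}=\hat\mu(\lambda'-\lambda)=m\,\delta_{\lambda,\lambda'}$. Here we use the convention $\hat\mu(t)=\int e^{-2\pi i t\cdot x}\,d\mu(x)$, so $\hat\mu(0)=m$, $|\hat\mu|\le m$ and $\hat\mu(-s)=\overline{\hat\mu(s)}$.

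\emph{Bessel sum.} For any $t\in\br^d$, $\langle e_{-t},e_\lambda\rangle_{L^2(\mu)}=\int e^{-2\pi i (t+\lambda)\cdot x}\,d\mu(x)=\hat\mu(t+\lambda)$, and $\|e_{-t}\|^2_{L^2(\mu)}=m$. Hence, if the normalized system is orthonormal,
$$\sum_{\lambda\in\Lambda}|\hat\mu(t+\lambda)|^2=m\sum_{\lambda\in\Lambda}\bigl|\langle e_{-t},e_\lambda/\sqrt m\rangle\bigr|^2\le m\cdot\|e_{-t}\|^2=m^2,$$
with equality if and only if $e_{-t}$ lies in the closed span of $\{e_\lambda:\lambda\in\Lambda\}$.

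\emph{Forward direction.} If $\Lambda$ is a spectrum, the system is orthogonal and complete, so Parseval gives equality for every $t$.

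\emph{Reverse direction.} Assume $\sum_{\lambda\in\Lambda}|\hat\mu(t+\lambda)|^2=m^2$ for all $t$.
\begin{enumerate}
\item Take $t=-\lambda_0$ with $\lambda_0\in\Lambda$. The term $\lambda=\lambda_0$ contributes $|\hat\mu(0)|^2=m^2$, so all other terms vanish: $\hat\mu(\lambda-\lambda_0)=0$ for $\lambda\neq\lambda_0$. This is exactly orthogonality.
\item Since the normalized system is now orthonormal, equality in Bessel's inequality shows that every $e_{-t}$, $t\in\br^d$, lies in the closed span of $\{e_\lambda:\lambda\in\Lambda\}$.
\item The family $\{e_s:s\in\br^d\}$ has dense span in $L^2(\mu)$ for a finite Radon measure. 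Indeed, if $g\in L^2(\mu)$ is orthogonal to all $e_s$, the finite complex measure $g\,d\mu$ has vanishing Fourier–Stieltjes transform, so $g\,d\mu=0$ by uniqueness, i.e.\ $g=0$ $\mu$-a.e. Therefore the system is complete, and $\Lambda$ is a spectrum.
\end{enumerate}

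The only step needing an external fact is the density in (3), which rests on the uniqueness theorem for Fourier transforms of finite measures. Everything else is Hilbert-space bookkeeping, the key point being that the diagonal term already saturates the sum in (1).
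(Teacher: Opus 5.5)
Your proof is correct and follows the paper's argument step for step: Parseval applied to $e_{-t}$ gives the forward direction, the substitution $t=-\lambda_0$ extracts orthogonality, and equality in Bessel's inequality places each $e_{-t}$ in the closed span. The only difference is the final density step. There you apply the uniqueness theorem for Fourier--Stieltjes transforms to the finite measure $g\,d\mu$, whereas the paper appeals to Stone--Weierstrass plus approximation by $C_c$ functions. Your version is arguably the cleaner justification, since it avoids having to control the tail of a possibly non-compactly supported $\mu$.
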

	\begin{proof}
		Suppose $\Omega$ has a spectrum $\Lambda.$ Then 
		$$\{\frac{1}{\sqrt{m}}e_\lambda : \lambda\in\Lambda\}$$ is an orthonormal basis for $L^2(\mu)$. Applying the Parseval identity to the function $e_{-t}$, $t\in\br^{d}$ we obtain 
		$$m=\|e_{-t}\|^2=\sum_{\lambda\in\Lambda}\left|\ip{e_{-t}}{\frac{1}{\sqrt{m}}e_\lambda}_{L^2(\mu)}\right|^2$$ $$=\frac{1}{m}\sum_{\lambda\in\Lambda}|\int_{\R^{d}}e^{-2\pi i (t+\lambda)\cdot u}d\mu(u)|^{2}=\frac{1}{m}\sum_{\lambda\in\Lambda}\left|\hat\mu(t+\lambda)\right|^2$$ and therefore $$ \sum_{\lambda\in\Lambda}\left|\hat\mu(t+\lambda)\right|^2=m^{2}.$$
		Conversely, let $$\sum_{\lambda\in \Lambda}|\hat{\mu}(t+\lambda)|^{2}=m^{2}.$$
		Substitute $t=-\lambda_{0}$ and since $\hat\mu(0)=\mu(\br^d)=m$, we get that $\hat\mu(-\lambda_0+\lambda)=0$ for $\lambda\neq \lambda_0$, and this proves that the exponentials $e_\lambda$ and $e_{\lambda_0}$ are orthogonal. Also, $\norm{e_{-t}}^{2}=m$ and $$\sum_{\lambda\in\Lambda}\left|\ip{e_{-t}}{\frac{1}{\sqrt{m}}e_\lambda}_{L^2(\mu)}\right|^2=\frac{1}{m}\sum_{\lambda\in\Lambda}|\int_{\R^{d}}e^{-2\pi i (t+\lambda)\cdot u}d\mu(u)|^{2}=\frac{1}{m}\sum_{\lambda\in\Lambda}\left|\hat\mu(t+\lambda)\right|^2=m,$$ which shows that $$\|e_{-t}\|^2=\sum_{\lambda\in\Lambda}\left|\ip{e_{-t}}{\frac{1}{\sqrt{m}}e_\lambda}_{L^2(\mu)}\right|^2.$$
		On the right hand side we have the norm squared of the projection of $e_{-t}$ onto the closed span of the functions $\{e_\lambda :\lambda\in\Lambda\}$. This shows that $e_{-t}$ is in this closed span. By the Stone-Weierstrass theorem and by approximation by continuous compactly supported functions, the functions $e_{-t}$ span the entire space $L^2(\mu)$. It follows that $\{e_\lambda :\lambda\in\Lambda\}$ also span the entire space $L^2(\mu)$ and therefore they form an orthogonal basis for it.

	\end{proof}

	The next lemma shows that we can convolute a spectral measure with a spectral finite set to obtain a new spectral measure whose spectrum is the direct sum of the two spectra. 
	\begin{lemma}\label{lemco}
		Let $F$ be a finite subset of $\br^d$, and suppose the measure $\Sha_F$ has spectrum $\Lambda_1$. Let $\mu_2$ be a finite positive Radon measure on $\br^d$ with spectrum $\Lambda_2$. Assume in addition that 
		\begin{equation}\label{eqco1}
			f\cdot\lambda_2\in\bz,\mbox{ for all }f\in F\mbox{ and }\lambda_2\in\Lambda_2.
		\end{equation}
		Then the convolution measure $\Sha_F\ast \mu_2$ has spectrum $\Lambda_1\oplus\Lambda_2$. 
	\end{lemma}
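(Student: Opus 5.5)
We will use the criterion of Lemma \ref{lemma1}. Set $\mu=\Sha_F\ast\mu_2$. Its total mass is $m=|F|\,m_2$, where $m_2=\mu_2(\br^d)$, and $|F|$ is the total mass of $\Sha_F$. Its Fourier transform factors as
$$\hat\mu(t)=\hat\Sha_F(t)\,\hat\mu_2(t),\qquad \hat\Sha_F(t)=\sum_{f\in F}e^{-2\pi i f\cdot t}.$$
By Lemma \ref{lemma1} applied to the two factors, we have
$$\sum_{\lambda_1\in\Lambda_1}|\hat\Sha_F(t+\lambda_1)|^2=|F|^2,\qquad \sum_{\lambda_2\in\Lambda_2}|\hat\mu_2(t+\lambda_2)|^2=m_2^2\qquad\text{for all } t\in\br^d.$$
The goal is to show $\sum_{\lambda\in\Lambda_1\oplus\Lambda_2}|\hat\mu(t+\lambda)|^2=|F|^2m_2^2$ for all $t$.

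The key observation comes from hypothesis \eqref{eqco1}. For every $\lambda_2\in\Lambda_2$ and every $s$,
$$\hat\Sha_F(s+\lambda_2)=\sum_{f\in F}e^{-2\pi i f\cdot s}e^{-2\pi i f\cdot\lambda_2}=\hat\Sha_F(s),$$
so $\hat\Sha_F$ is $\Lambda_2$-periodic.

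First we check that the sum $\Lambda_1+\Lambda_2$ is direct. Suppose $\lambda_1+\lambda_2=\lambda_1'+\lambda_2'$ with $(\lambda_1,\lambda_2)\neq(\lambda_1',\lambda_2')$. Then $\lambda_1\ne\lambda_1'$, since otherwise the pairs would coincide.
\begin{itemize}
\item Orthogonality in $L^2(\Sha_F)$ gives $\hat\Sha_F(\lambda_1-\lambda_1')=0$.
\item By the periodicity above, $\hat\Sha_F(\lambda_1-\lambda_1')=\hat\Sha_F(\lambda_1-\lambda_1'+\lambda_2-\lambda_2')=\hat\Sha_F(0)=|F|\neq0$, since $\lambda_1-\lambda_1'+\lambda_2-\lambda_2'=0$.
\end{itemize}
This is a contradiction, so $\Lambda_1\oplus\Lambda_2$ is well defined and the sum over it can be indexed by pairs $(\lambda_1,\lambda_2)$.

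Now compute, summing over $\lambda_1$ first and using periodicity to drop $\lambda_2$ from the argument of $\hat\Sha_F$:
$$\sum_{\lambda_2\in\Lambda_2}\sum_{\lambda_1\in\Lambda_1}|\hat\Sha_F(t+\lambda_1+\lambda_2)|^2|\hat\mu_2(t+\lambda_1+\lambda_2)|^2
=\sum_{\lambda_1\in\Lambda_1}|\hat\Sha_F(t+\lambda_1)|^2\sum_{\lambda_2\in\Lambda_2}|\hat\mu_2(t+\lambda_1+\lambda_2)|^2.$$
The inner sum equals $m_2^2$ by Lemma \ref{lemma1} applied to $\mu_2$ at the point $t+\lambda_1$. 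The remaining outer sum then equals $|F|^2$. The total is $|F|^2m_2^2=m^2$, and Lemma \ref{lemma1} gives that $\mu$ has spectrum $\Lambda_1\oplus\Lambda_2$.

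The only delicate points are the interchange of the order of summation, which is justified by Tonelli since all terms are nonnegative, and the directness of the sum handled above. The periodicity trick is the heart of the argument.
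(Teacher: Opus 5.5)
Your proof is correct and follows the paper's argument: the $\Lambda_2$-periodicity of $\hat\Sha_F$ coming from \eqref{eqco1}, the Tonelli factorization of the double sum, and the criterion of Lemma~\ref{lemma1}. The only variation is in the directness step. You combine the orthogonality of $e_{\lambda_1},e_{\lambda_1'}$ in $L^2(\Sha_F)$ with that periodicity, whereas the paper evaluates the already computed identity at $t=-\lambda_1-\lambda_2$ and finds two terms equal to $(\Sha_F\ast\mu_2(\br^d))^2$. Because you prove directness first, your passage from the sum over pairs to the sum over the set $\Lambda_1+\Lambda_2$ is cleaner than the paper's.
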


	\begin{proof}
		
		Note first that, for $t\in\br^d$ and $\lambda_2\in\Lambda_2$, with \eqref{eqco1},
		$$\hat\Sha_F(t+\lambda_2)=\sum_{f\in F}e^{-2\pi i f\cdot (t+\lambda_2)}=\sum_{f\in F}e^{-2\pi i f\cdot t}=\hat\Sha_F(t).$$
		Using Lemma \ref{lemma1} we compute:
		$$\sum_{\lambda_1\in\Lambda_1}\sum_{\lambda_2\in\Lambda_2}\left|\widehat{\Sha_F\ast\mu_2}(t+\lambda_1+\lambda_2)\right|^2$$
		$$=\sum_{\lambda_1\in\Lambda_1}\sum_{\lambda_2\in\Lambda_2}\hat\Sha_F(t+\lambda_1+\lambda_2)|^2|\hat\mu_2(t+\lambda_1+\lambda_2)|^2$$
		$$=\sum_{\lambda_1\in\Lambda_1}|\hat\Sha_F(t+\lambda_1)|^2\sum_{\lambda_2\in\Lambda_2}|\hat\mu_2(t+\lambda_1+\lambda_2)|^2$$
		$$=\sum_{\lambda_1\in\Lambda_1}|\hat\Sha_F(t+\lambda_1)|^2\mu_2(\br^d)^2=|A|^2\mu_2(\br^d)^2=(\Sha_F\ast\mu_2(\br^d))^2.$$
		
		This proves that $\Lambda_1+\Lambda_2$ is a spectrum for the convolution. 
		
		To show that the sum $\Lambda_1\oplus\Lambda_2$ is direct, assume now that $\lambda_1+\lambda_2=\lambda_1'+\lambda_2'$ for some $\lambda_1,\lambda_1'\in\Lambda_1$ and $\lambda_2,\lambda_2'\in\Lambda_2$, $(\lambda_1,\lambda_2)\neq(\lambda_1',\lambda_2')$. Take $t=-\lambda_1-\lambda_2$ in the previous sequence of equalities. Then, the sum on the left of the equalities above, contains the terms $$|\widehat{\Sha_F\ast\mu_2}(t+\lambda_1+\lambda_2)|^2+|\widehat{\Sha_F\ast\mu_2}(t+\lambda_1'+\lambda_2')|^2=2|\widehat{\Sha_F\ast\mu_2}(0)|^2=2|\Sha_F\ast\mu_2(\br^d)|^2,$$
		and this yields a contradiction.

	\end{proof}

	The next two lemmas show  that translation of a pair measure is another pair measure for the same set, and if we dilate a spectral set, then the dilated set has spectrum the contraction of the pair measure with the inverse transpose of the dilation matrix. 
	
	\begin{lemma}
		\label{lemt}
		Let $\mu$ be a positive Radon measure on $\br^d$. For $t\in\br^d$, define the translation measure 
		$$T_t\mu(E)=\mu(E+t),\mbox{ for Borel subsets $E$ of $\br^d$}.$$
	Let $\Omega$ a measurable subset of $\br^d$. If the Fourier transform $\mathscr F_{\Omega,\mu}$ extends to an isometric/onto map between $L^2(\Omega)$ and $L^2(\mu)$, then so does $\mathscr F_{\Omega,T_t\mu}$. In particular, if $\mu$ is a pair measure for $\Omega$ then $T_t\mu$ is also a pair measure for $\Omega$. 
	\end{lemma}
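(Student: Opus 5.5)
The proof comes down to one observation: the map $g\mapsto g(\cdot+t)$ is a unitary $L^2(T_t\mu)\to L^2(\mu)$, and it intertwines $\mathscr F_{\Omega,T_t\mu}$ with $\mathscr F_{\Omega,\mu}$ composed with a modulation on $L^2(\Omega)$. I would carry this out in three steps.

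First, identify $T_t\mu$ concretely. Since $T_t\mu(E)=\mu(E+t)$, the change-of-variables formula gives
$$\int g\,d(T_t\mu)=\int g(s-t)\,d\mu(s)$$
for every nonnegative Borel $g$. So $T_t\mu$ is the push-forward of $\mu$ under $s\mapsto s-t$, and it is again a positive Radon measure. Consequently the map
$$V_t:L^2(T_t\mu)\to L^2(\mu),\qquad (V_tg)(s)=g(s-t),$$
is a unitary, with inverse $h\mapsto h(\cdot+t)$.

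Second, use the modulation $M_t f=e_{t}f$, where $e_t(x)=e^{2\pi i t\cdot x}$. Multiplication by a unimodular function is a unitary on $L^2(\Omega)$, and it preserves $L^1(\Omega)\cap L^2(\Omega)$. For $f$ in that space, a direct computation gives
$$\widehat{M_tf}(s)=\int f(x)e^{2\pi i t\cdot x}e^{-2\pi i s\cdot x}\,dx=\hat f(s-t).$$
In other words, $\mathscr F_{\Omega,\mu}M_tf=V_t\,\mathscr F_{\Omega,T_t\mu}f$ on $L^1\cap L^2$. This gives the identity
$$\mathscr F_{\Omega,T_t\mu}=V_t^{-1}\,\mathscr F_{\Omega,\mu}\,M_t .$$
It also shows that $\int|\hat f|^2\,dT_t\mu=\int|\widehat{M_tf}|^2\,d\mu<\infty$, so condition (1) of Definition \ref{defp1} transfers from $\mu$ to $T_t\mu$.

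Third, transfer the properties through this factorization. If $\mathscr F_{\Omega,\mu}$ is isometric on $L^1\cap L^2$, then so is $\mathscr F_{\Omega,T_t\mu}$, because it is a composition with unitaries. Its extension by continuity is then $V_t^{-1}\mathscr F M_t$ on all of $L^2(\Omega)$. For the onto/dense-range part, note that $M_t$ maps $L^1\cap L^2$ onto itself. Hence the range of $\mathscr F_{\Omega,T_t\mu}$ on $L^1\cap L^2$ is the image under $V_t^{-1}$ of the range of $\mathscr F_{\Omega,\mu}$, and density (or surjectivity of the extension) is preserved. Combining isometry and dense range gives the pair-measure statement.

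There is no real obstacle here. The only point needing care is the sign convention: one must check that $T_t\mu$ corresponds to the shift $s\mapsto s-t$ and match it with $e_{t}$ rather than $e_{-t}$. An error there would only replace $t$ by $-t$, which is harmless since the claim holds for all $t$.
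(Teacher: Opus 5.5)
Your proposal is correct and follows the paper's proof. The paper uses the same intertwining $\mathscr F_{\Omega,T_t\mu}\circ\mathscr M_t=\tilde T_t\circ\mathscr F_{\Omega,\mu}$, with $\mathscr M_t f=e_{-t}f$ and $\tilde T_t h=h(\cdot+t)$; this is your factorization $\mathscr F_{\Omega,T_t\mu}=V_t^{-1}\mathscr F_{\Omega,\mu}M_t$ with the unitaries inverted, and your checks of condition (1) of Definition \ref{defp1} and of the dense range fill in what the paper leaves as ``it follows''.
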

	
	\begin{proof}
Let $t\in\br^d$.		Note that, for a Borel set $E$, 
		$$\int \chi_E\,dT_t\mu=T_t\mu(E)=\mu(E+t)=\int \chi_{E+t}\,d\mu=\int \chi_E(x-t)\,d\mu(x),$$
		and so 
		$$\int f\,d T_t\mu=\int f(x-t)\,d\mu(x),\quad (f\in L^1(T_t\mu)).$$
		
		Define the operator $\tilde T_t$ from $L^2(\mu)$ to $L^2(T_t\mu)$ by 
		$$\tilde T_th(x)=h(x+t),\quad (x\in\br^d, h\in L^2(\mu)).$$
		This is an isometric isomorphism. 
		
		Define the modulation operator $\mathscr M_t$ on $L^2(\Omega)$:
		$$\mathscr M_t f=e_{-t}f,\quad (f\in L^2(\Omega)).$$
		This is also an isometric isomophism.

		A simple computation shows that 
		$$\mathscr F_{\Omega,T_t\mu}\circ \mathscr M_t=\tilde T_t\circ \mathscr F_{\Omega,\mu},$$
		i.e., the following diagram is commutative:
		
		\begin{center}
			
			\begin{tikzcd}
				L^2(\Omega) \arrow[r, "\mathscr M_t"] \arrow[d, "\mathscr F_{\Omega,\mu}"'] 
				& L^2(\Omega) \arrow[d, "\mathscr F_{\Omega,T_t\mu}"] \\
				L^2(\mu) \arrow[r, "\tilde T_t"'] 
				& L^2(T_t\mu)
			\end{tikzcd}
		\end{center}
		It follows that $\mathscr F_{\Omega,T_t\mu}$ is isometric/onto if $\mathscr F_{\Omega,\mu}$ is. 
	
	\end{proof}

	\begin{lemma}
		\label{lemdi}
		Let $\mu$ be a positive Radon measure on $\br^d$. Let $M$ be an invertible $d\times d$ real matrix. Define the dilation measure $D_M\mu$ by 
		$$\int g\,d D_M\mu=\frac{1}{|\det M|}\int g((M^T)^{-1}x)d\mu(x), \mbox{ whenever the function }x\to g((M^T)^{-1}x)\mbox{ is in } L^1(\mu).$$
		Let $\Omega$ be a measurable subset of $\br^d$. If the Fourier transform $\mathscr F_{\Omega,\mu}$ extends to an isometric/onto map between $L^2(\Omega)$ and $L^2(\mu)$, then so does $\mathscr F_{M\Omega,D_M\mu}$. If $\Omega$ is spectral with pair measure $\mu$, then $M\Omega$ is spectral with pair measure $D_M\mu$.  
	\end{lemma}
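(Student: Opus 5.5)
Mirroring the proof of Lemma \ref{lemt}, the plan is to build a commutative diagram
\[
\mathscr F_{M\Omega,D_M\mu}\circ U_M=V_M\circ\mathscr F_{\Omega,\mu},
\]
where $U_M$ and $V_M$ are isometric isomorphisms. On the space side take the normalized dilation
\[
U_M:L^2(\Omega)\to L^2(M\Omega),\qquad U_Mf(y)=|\det M|^{-1/2}f(M^{-1}y).
\]
It is unitary by the change of variables $y=Mx$. It also maps $L^1(\Omega)\cap L^2(\Omega)$ onto $L^1(M\Omega)\cap L^2(M\Omega)$. On the frequency side take
\[
V_M:L^2(\mu)\to L^2(D_M\mu),\qquad V_Mh(\xi)=|\det M|^{1/2}h(M^T\xi).
\]
By the definition of $D_M\mu$, with $g=|V_Mh|^2$,
\[
\int|V_Mh|^2\,dD_M\mu=\frac{|\det M|}{|\det M|}\int|h(M^T(M^T)^{-1}x)|^2\,d\mu(x)=\|h\|^2_{L^2(\mu)},
\]
so $V_M$ is isometric. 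Its inverse is $k\mapsto|\det M|^{-1/2}k((M^T)^{-1}\cdot)$, so it is onto. One should also note that $D_M\mu$ is again a positive Radon measure: it is the push-forward of $|\det M|^{-1}\mu$ under $x\mapsto (M^T)^{-1}x$.

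The key computation is for $f\in L^1(\Omega)\cap L^2(\Omega)$ and $\xi\in\br^d$:
\[
\widehat{U_Mf}(\xi)=|\det M|^{-1/2}\int f(M^{-1}y)e^{-2\pi i\xi\cdot y}\,dy=|\det M|^{1/2}\int f(x)e^{-2\pi i (M^T\xi)\cdot x}\,dx=V_M\hat f(\xi).
\]
This uses $\xi\cdot Mx=(M^T\xi)\cdot x$. It gives the diagram pointwise on the dense subspace $L^1\cap L^2$. It also gives condition (1) of Definition \ref{defp1} for $(M\Omega,D_M\mu)$, since $\int|\widehat{U_Mf}|^2\,dD_M\mu=\int|\hat f|^2\,d\mu<\infty$.

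The conclusions now transfer through the diagram.
\begin{itemize}
\item If $\mathscr F_{\Omega,\mu}$ is isometric on $L^1\cap L^2$, then so is $\mathscr F_{M\Omega,D_M\mu}=V_M\mathscr F_{\Omega,\mu}U_M^{-1}$ on $L^1(M\Omega)\cap L^2(M\Omega)$, and it therefore extends isometrically to $L^2(M\Omega)$.
\item If the range of $\mathscr F_{\Omega,\mu}$ is dense (onto after extension), then the range of $\mathscr F_{M\Omega,D_M\mu}$ is $V_M$ of it, which is again dense, respectively everything.
\end{itemize}
The statement about pair measures follows at once.

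There is no real obstacle. The only care needed is the bookkeeping of the factors $|\det M|^{\pm1/2}$ and the transpose, and checking that the integrability hypothesis in the definition of $D_M\mu$ is met for $g=|V_Mh|^2$, which is exactly $|\det M|\,|h|^2\in L^1(\mu)$.
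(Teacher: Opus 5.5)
Your proposal is correct and follows the paper's proof: the same commutative diagram $\mathscr F_{M\Omega,D_M\mu}\circ U_M=V_M\circ\mathscr F_{\Omega,\mu}$ built from normalized dilation unitaries, and your $V_M$ is exactly the paper's $\tilde D_M$. Your space-side map $f\mapsto|\det M|^{-1/2}f(M^{-1}\cdot)$ is the correct one (the paper writes $f(Mx)$ for $x\in M\Omega$, evidently a misprint for $f(M^{-1}x)$), and you also write out the change-of-variables computation and the $L^1\cap L^2$ bookkeeping that the paper leaves as ``a quick computation''.
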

	
	\begin{proof}
		Define the operator $D_M: L^2(\Omega)\rightarrow L^2(M\Omega)$ by 
		$$D_Mf (x)=\frac{1}{\sqrt{|\det M|}}f(Mx),\quad (x\in M\Omega, f\in L^2(\Omega)).$$
			$D_M$ is an isometric isomorphism, as can be seen by a change of variable.

		We also denote by $\tilde D_M$ the operator from $L^2(\mu)$ to $L^2(D_M\mu)$, defined by 
		$$\tilde D_M h(x)=\sqrt{|\det M|}h(M^Tx),\quad (x\in \br^d,h\in L^2(\mu)).$$
		This is also an isometric isomorphism.

			A quick computation shows that 
			$$\mathscr F_{M\Omega,D_M\mu}\circ D_M=\tilde D_M\circ\mathscr F_{\Omega,\mu},$$
			i.e., the diagram is commutative:
		\begin{center}
			
			\begin{tikzcd}
				L^2(\Omega) \arrow[r, "D_M"] \arrow[d, "\mathscr F_{\Omega,\mu}"'] 
				& L^2(M\Omega) \arrow[d, "\mathscr F_{M\Omega,D_M\mu}"] \\
				L^2(\mu) \arrow[r, "\tilde D_M"'] 
				& L^2(D_M\mu)
			\end{tikzcd}
		\end{center}
			It follows that $\mathscr F_{M\Omega,D_M\mu}$ is isometric/onto if $\mathscr F_{\Omega,\mu}$ is. 
	\end{proof}

	The next lemma shows that if some renormalized Lebesgue measures are in duality (actually just the isometry property is enough), and the pair measure has a tiling property, then the normalization constant $c$ is completely determined by the tiling property, and it has to be equal to the cardinality of the tiling set $|F|$. 
	
	\begin{lemma}
		\label{lemc}
		Let $\Omega$ and $\tilde \Omega$ be measurable subsets of $\br^d$ and $c$ some positive constant. Let $\mu$ be $c\cdot\Leb|_{\tilde\Omega}$. Assume that the Fourier transform $\mathscr F_{\Omega,\mu}$ extends to an isometry from $L^2(\Omega)$ to $L^2(\mu)$. Assume in addition that $\tilde\Omega$ tiles $\br^d$ with a finite set $F$. Then 
		$$c=|F|.$$
	\end{lemma}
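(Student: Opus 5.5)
The plan is to use the isometry not just for one function, but for all modulations of a fixed function, and then sum over the tiling set $F$ so that the sets $\tilde\Omega+s$, $s\in F$, reassemble all of $\br^d$. At that point the classical Plancherel theorem on $\br^d$ pins down $c$.

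First fix a nonzero $f\in L^1(\Omega)\cap L^2(\Omega)$; this needs $|\Omega|>0$, which I assume since otherwise the statement is vacuous. For $s\in\br^d$ the modulated function $e_s f$ also lies in $L^1(\Omega)\cap L^2(\Omega)$ and has the same $L^2$ norm. Its Fourier transform is the translate
$$\widehat{e_sf}(x)=\hat f(x-s).$$
The isometry hypothesis applied to $e_sf$ therefore gives
$$\|f\|_{L^2(\Omega)}^2=\|e_sf\|^2_{L^2(\Omega)}=c\int_{\tilde\Omega}|\hat f(x-s)|^2\,dx=c\int_{\tilde\Omega-s}|\hat f(y)|^2\,dy .$$
This is the same mechanism as in Lemma \ref{lemt}, but used directly on the identity rather than on the operators.

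Next I apply this with $s$ replaced by $-s$ for each $s\in F$ and sum over the finite set $F$. Because $\tilde\Omega\oplus F=\br^d$ up to measure zero, the sets $\tilde\Omega+s$ for $s\in F$ partition $\br^d$. Summing the identities therefore yields
$$|F|\,\|f\|^2_{L^2(\Omega)}=c\sum_{s\in F}\int_{\tilde\Omega+s}|\hat f|^2=c\int_{\br^d}|\hat f|^2=c\,\|f\|_{L^2(\br^d)}^2,$$
where the last step is the classical Plancherel theorem for $f\in L^1\cap L^2(\br^d)$, extended by zero outside $\Omega$. Since $\|f\|\neq0$, dividing gives $c=|F|$.

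The only points needing care are minor. One is the sign convention in the modulation/translation relation. The other is justifying that the isometry identity holds for each $e_sf$: it does, because $e_sf$ is again in the dense domain $L^1(\Omega)\cap L^2(\Omega)$. I do not expect a genuine obstacle beyond these.
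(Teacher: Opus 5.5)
Your proof is correct and is essentially the paper's argument: the paper invokes Lemma \ref{lemt} to get that $T_{-f}\mu=c\Leb|_{\tilde\Omega+f}$ is again an isometric target for each $f\in F$, then sums over $F$ and applies Plancherel on $\br^d$; your modulation identity $\widehat{e_sf}(x)=\hat f(x-s)$ does the same thing directly on functions. One wording fix: when $|\Omega|=0$ the statement is not vacuous but degenerate (the hypothesis holds for every $c$, so $c=|F|$ can fail), so $|\Omega|>0$ is an implicit assumption of the lemma, which the paper's proof also uses when it picks a nonzero $\varphi$.
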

	
	\begin{proof}
		By Lemma \ref{lemt}, we have that $\mathscr F_{\Omega,T_{-f}\mu}$ also extends to an isometry, for any $f\in F$. Note that $T_{-f}\mu=c\Leb|_{\tilde\Omega+f}$.

		Take now some non-zero function $\varphi\in L^2(\Omega)$. We have 
		$$\|\varphi\|_{L^2(\Omega)}^2=\int_{\br^d}|\varphi|^2=\int_{\br^d}|\hat \varphi|^2=\sum_{f\in F}\int_{\tilde\Omega+f}|\hat \varphi|^2=\sum_{f\in F}\frac{1}{c}\int |\hat \varphi|^2\,d T_{-f}\mu$$$$=\frac1c\sum_{f\in F}\|\varphi\|_{L^2(\Omega)}^2=\frac{|F|}{c}\|\varphi\|_{L^2(\Omega)}^2.$$
		Thus $c=|F|$.
	\end{proof}

	The next lemma is also well known, see e.g. \cite[page 354]{LaWa96}.
	\begin{lemma}\label{lemcg}
	Let $A$ be a $d\times d$ invertible real matrix	and $\Omega_0$ a measurable subset of $\br^d$. Suppose $\Omega_0$ tiles $\br^d$ by $A\bz^d$. Then $\Omega_0$ is congruent to $A(0,1)^d$ $\mod A\bz^d$. In particular, 
	$$|\Omega_0|=|\det A|.$$
	\end{lemma}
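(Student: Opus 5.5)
The plan is to build the congruence explicitly, cell by cell, and then read off the measure identity from translation invariance of Lebesgue measure. Set $P:=A[0,1)^d$. Then $P$ tiles $\br^d$ by $A\bz^d$: every $x\in\br^d$ can be written uniquely as $x=Ak+p$ with $k\in\bz^d$ and $p\in P$. Concretely, $k=\lfloor A^{-1}x\rfloor$ taken coordinatewise. Since $A[0,1)^d$ and $A(0,1)^d$ differ only by a Lebesgue null set (a finite union of pieces of hyperplanes), it is enough to prove congruence with $P$.

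The first step is to define the pieces. For $k\in\bz^d$ put
$$E_k:=\Omega_0\cap(P+Ak),\qquad F_k:=E_k-Ak=(\Omega_0-Ak)\cap P.$$
Because the translates $P+Ak$ partition $\br^d$, the sets $E_k$ are measurable and partition $\Omega_0$. The sign convention in Definition \ref{defp} asks for $F_k=E_k+Ak$. This is handled by reindexing with $k\mapsto -k$, that is, by using the partition $\{E_{-k}\}$ of $\Omega_0$ instead.

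The second step, which is the only substantive one, is to show that the $F_k$ partition $P$ up to measure zero.
\begin{itemize}
\item \emph{Disjointness.} For $k\neq k'$ we have $F_k\cap F_{k'}\subseteq(\Omega_0-Ak)\cap(\Omega_0-Ak')$. By the packing part of the tiling hypothesis this intersection is null.
\item \emph{Covering.} The tiling hypothesis gives $\bigcup_k(\Omega_0-Ak)=\br^d$ up to a null set, using that $-A\bz^d=A\bz^d$. Intersecting with $P$ gives $\bigcup_k F_k=P$ up to a null set.
\end{itemize}
Hence $\Omega_0$ is congruent to $P$, and therefore to $A(0,1)^d$, modulo $A\bz^d$.

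Finally, the measure identity follows from translation invariance and countable additivity:
$$|\Omega_0|=\sum_k|E_k|=\sum_k|F_k|=|P|=|\det A|\,|[0,1)^d|=|\det A|.$$
I do not expect a real obstacle here. The only care needed is bookkeeping of null sets and of the sign convention in the definition of congruence.
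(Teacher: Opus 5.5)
Your proof is correct and is essentially the paper's argument: both split $\Omega_0$ according to which lattice translate of the unit cell each point lies in, and then use the tiling of $\Omega_0$ to show that the translated pieces partition the cell. The only differences are cosmetic. The paper first reduces to $A=I$ via $u=Ax$ and uses the open cube $(0,1)^d$, so its partition of $\Omega_0$ holds only up to null sets, whereas you work directly with the half-open cell $A[0,1)^d$ and handle the sign convention by reindexing.
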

	
	\begin{proof}
		By using a substitution $u=Ax$, we can assume that $A$ is the identity and $\Omega_0$ tiles by $\bz^d$. 
	
Define the sets, for $k\in\bz^d$, 
$$E_k:=\{x\in \Omega_0 : x+k\in(0,1)^d\},\quad F_k=E_k+k.$$
We claim that $\{E_k : k\in\bz^d\}$ is a partition of $\Omega_0$ and $\{F_k : k\in\bz^d\}$ is a partition of $(0,1)^d$. 

Indeed, the fact that $(0,1)^d$ tiles with $\bz^d$ implies that $\{E_k\}_k$ is a partition of $\Omega_0$: if $x\in E_k\cap E_{k'}$, then $x+k, x+k'\in (0,1)^d$, which implies that $k=k'$. Thus the sets $E_k$ are mutually disjoint.  Also, if $x\in \Omega_0$ then there exists $k\in\bz^d$ such that $x+k\in(0,1)^d$, so $x\in E_k$. Thus $\cup_k E_k=\Omega_0$.

Similarly, the fact that $\Omega_0$ tiles with $\bz^d$ implies that $\{F_k\}_k$ is a partition of $(0,1)^d$. This means that $\Omega_0$ and $(0,1)^d$ are congruent $\mod \bz^d$.

Finally 
$$|\Omega_0|=\sum_{k\in\bz^d}|E_k|=\sum_{k\in\bz^d}|F_k|=|(0,1)^d|=1.$$
	\end{proof}

	\section{Proofs}\label{sec3}

	\begin{proof}[Proof of Theorem \ref{th1}]
		
		(i)$\Rightarrow$(ii). 	
First we define a nested sequence of spectral subsets of $\Omega$ which cover $\Omega$.		
	
	For $n\in\bn$, let 
	$$\Omega_{n}=\Omega_{0}+AL_n,\mbox{ where }L_n=\{-n,\dots,n\}^d.$$
	Clearly, the sets $\Omega_{n}$ are increasing and they cover $\Omega$. Also, with Lemma \ref{lemco}, since the set $AL_n$ has spectrum $\frac1{2n+1}\ati L_n$, we get that the set $\Omega_{n}$ has spectrum $$\Lambda_{n}=\frac{1}{2n+1}(\ati\{-n,\dots,n\}^{d})+\Lambda_{0}.$$
	\medskip
	
{\bf Isometry.}		Let $f\in C_{c}^{\infty}(\Omega)$. Let $n$ be big enough so that $\supp f\subseteq\Omega_{n}=\Omega_{0}+A\{-n,\dots,n\}^{d}.$ Since $\Omega_{n}$ has spectrum $\frac{1}{2n+1}(A^{T})^{-1})L_{n}+\Lambda_{0},$ we have $$\int_{\Omega}|f(x)|^{2}dx=\int_{\Omega_{n}}|f(x)|^{2}dx=\sum_{l\in L_{n},k\in\Lambda_{0}}|\ip{f}{\frac{1}{\sqrt{(2n+1)^{d}|\Omega_{0}|}}e_{\frac{1}{2n+1}(A^{T})^{-1}l+k}}|^{2}$$ $$=\sum_{l\in L_{n},k\in\Lambda_{0}}\frac{|\det A|}{|\Omega_{0}|}|\hat{f}(\frac{1}{2n+1}(A^{T})^{-1}l+k)|^{2}\frac{1}{(2n+1)^{d}|\det A|}.$$
	This is the Riemann sum on the set $(A^{T})^{-1}[-\frac{1}{2},\frac{1}{2}]^{d}+\Lambda_{0}$ with intermediate points $\frac{1}{2n+1}(A^{T})^{-1}l+k,l\in L_{n},k\in\Lambda_{0}$ on the partition $$\left\{\frac{1}{2n+1}(A^{T})^{-1}([l_{1}-\frac{1}{2},l_{1}+\frac{1}{2}]\times[l_{2}-\frac{1}{2},l_{2}+\frac{1}{2}]\times\cdots\times[l_{d}-\frac{1}{2},l_{d}+\frac{1}{2}])+k:(l_{1},\dots,l_{d})\in L_{n},k\in\Lambda_{0}\right\}.$$
	Note also that in each coordinate the leftmost endpoint is $\frac{-n-\frac{1}{2}}{2n+1}=-\frac{1}{2},$ and the rightmost endpoint is $\frac{n+\frac{1}{2}}{2n+1}=\frac{1}{2}.$ Also note that the sets $(A^{T})^{-1}[-\frac{1}{2},\frac{1}{2}]^{d}+k,k\in\Lambda_{0}$ are disjoint because $k$ is in the dual lattice $(A^{T})^{-1}\Zd$, as $\Lambda_{0}\subseteq (A^{T})^{-1}\Zd.$ Each interval in this partition has volume $\frac{1}{|\det A|}\frac{1}{(2n+1)^{d}}.$ This shows that we do indeed have Riemann sums and they converge to $$\frac{|\det A|}{|\Omega_{0}|}\int_{(A^{T})^{-1}[-\frac{1}{2},\frac{1}{2}]^{d}+\Lambda_{0}}|\hat{f}(t)|^{2}dt.$$ As $C^{\infty}_{c}(\Omega)$ is dense in $L^{2}(\Omega),$ the Fourier transform between the two $L^{2}$ spaces is isometric.
	
	\medskip

{\bf Onto.}	Note that we have for $k,k_{0}\in\Lambda_{0}$
	\begin{equation}
		\label{5.1}
		\hat{\chi}_{\Omega_{0}}(k-k_{0})=\int_{\Omega_{0}}e^{-2\pi i(k-k_{0})\cdot x}dx=\ip{e_{k_{0}}}{e_{k}}_{L^{2}(\Omega)}=\delta_{k,k_{0}}|\Omega_{0}|.
	\end{equation}
	The characteristic function $$\chi_{\Omega}=\chi_{\Omega_{0}+A\mathbb{Z}^{d}}$$ is periodic with period lattice $A\mathbb{Z}^{d}$, therefore its Fourier transform as a tempered distribution is :  $$\hat{\chi}_{\Omega}=\frac{1}{|\det A|}\sum_{k\in \mathbb{Z}^{d}}\hat{\chi}_{\Omega_{0}}((A^{T})^{-1}k)\delta_{(A^{T})^{-1}k}.$$
	This follows from the next:
	\begin{lemma}
		Let $\Tilde{h}$ be a function on $\mathbb{R}^{d}$ with period lattice $A\mathbb{Z}^{d}$ and let $h$ be its restriction to the period interval $h=\Tilde{h}|_{A(0,1)^{d}}\in L^{2}(A(0,1)^{d}).$ Represent $\Tilde{h}$ in the Fourier basis $\frac{1}{\sqrt{|\det A|}}e_{(A^{T})^{-1}k}$, $k\in\mathbb{Z}^{d}$:
		$$\Tilde{h}=\sum_{k\in \mathbb{Z}^{d}}\frac{1}{|\det A|}\langle h,e_{(A^{T})^{-1}k}\rangle_{L^{2}(A(0,1)^{d})}e_{(A^{T})^{-1}k}=\frac{1}{|\det A|}\sum_{k\in \mathbb{Z}^{d}}\hat{h}((A^{T})^{-1}k)e_{(A^{T})^{-1}k}.$$
		Then the Fourier transform of $\Tilde{h}$ as a tempered distribution is $$\hat{\Tilde{h}}=\frac{1}{|\det A|}\sum_{k\in\mathbb{Z}^{d}}\hat{h}((A^{T})^{-1}k)\delta_{(A^{T})^{-1}k}.$$
	\end{lemma}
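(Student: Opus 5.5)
The plan is to reduce the lemma to the Fourier transform of a single exponential, and then pass to the full series by continuity of the distributional Fourier transform. The two displayed formulas for $\tilde h$ agree because the normalized exponentials $\frac{1}{\sqrt{|\det A|}}e_{(A^T)^{-1}k}$, $k\in\bz^d$, form an orthonormal basis of $L^2(A(0,1)^d)$. This follows from Theorem \ref{thf1}, or directly by the substitution $u=Ax$ from the classical basis of $L^2((0,1)^d)$. Moreover,
$$\ip{h}{e_{(A^T)^{-1}k}}_{L^2(A(0,1)^d)}=\int_{A(0,1)^d}h(x)e^{-2\pi i (A^T)^{-1}k\cdot x}\,dx=\hat h((A^T)^{-1}k),$$
where $h$ is regarded as extended by zero outside $A(0,1)^d$. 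The series converges in $L^2(A(0,1)^d)$, and by periodicity in $L^2_{loc}(\br^d)$ to $\tilde h$.

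\textbf{Key steps.}
\begin{enumerate}
\item Show that the series converges to $\tilde h$ in $\mathcal S'(\br^d)$. The coefficients $c_k=\frac1{|\det A|}\hat h((A^T)^{-1}k)$ are square summable, hence bounded. Pairing the partial sums with a Schwartz function $\varphi$ gives $\sum_k c_k\hat\varphi(-(A^T)^{-1}k)$, which converges absolutely because $\hat\varphi$ decays rapidly.
\item Identify the limit with $\tilde h$. For this, observe that $L^2_{loc}$ convergence of the partial sums, together with a uniform bound of the form $\int_{A([0,1]^d+m)}|S_N|^2\le \|h\|^2$ for every cell $m$, allows dominated convergence against $\varphi$ summed over the cells.
\item Compute $\widehat{e_\xi}=\delta_\xi$ in $\mathcal S'$ directly from $\langle \widehat{e_\xi},\varphi\rangle=\langle e_\xi,\hat\varphi\rangle=\int\hat\varphi(x)e^{2\pi i\xi\cdot x}dx=\varphi(\xi)$, by Fourier inversion.
\item Use continuity of the Fourier transform on $\mathcal S'$ and linearity to obtain $\hat{\tilde h}=\sum_k c_k\delta_{(A^T)^{-1}k}$, with the series converging in $\mathcal S'$.
\end{enumerate}

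\textbf{Main obstacle.} The only delicate point is the justification of convergence in $\mathcal S'$ in the second step: the $L^2$ Fourier series on one period must be upgraded to tempered-distribution convergence on all of $\br^d$.

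I would handle this by splitting $\int \tilde h\varphi$ and $\int S_N\varphi$ over the translated cells $A(0,1)^d+Am$. On each cell, Cauchy–Schwarz gives
$$\left|\int_{\text{cell}}(S_N-\tilde h)\varphi\right|\le \|S_N-h\|_{L^2(A(0,1)^d)}\sup_{\text{cell}}|\varphi|\cdot|\det A|^{1/2}.$$
Since $\varphi$ decays rapidly, the sum over $m$ of $\sup_{\text{cell}}|\varphi|$ is finite. The total error is therefore at most a constant times $\|S_N-h\|_{L^2(A(0,1)^d)}$, which tends to $0$. The rest is routine.
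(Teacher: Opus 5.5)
Your proposal is correct and follows the same route as the paper: pair $\tilde h$ with $\hat\psi$, expand $\tilde h$ in its Fourier series, and use Fourier inversion to turn each $\int e_{(A^T)^{-1}k}\hat\psi$ into $\psi((A^T)^{-1}k)$. The one difference is that you actually justify exchanging the series and the integral, via the cell-by-cell Cauchy--Schwarz bound showing $S_N\to\tilde h$ in $\mathcal S'(\br^d)$; the paper only notes that the resulting sum $\sum_k \hat h((A^T)^{-1}k)\psi((A^T)^{-1}k)$ converges, so your version is the more complete of the two.
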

	\begin{proof}
		Let $\Lambda_{\tilde h}$ be tempered distribution associated to $\tilde h$, $$\Lambda_{\tilde h}(\psi)=\int_{\R^{d}}\tilde h(x)\psi(x)dx.$$ Then, for a Schwartz function $\psi$ $$\hat{\Lambda}_{\tilde h}(\psi)=\Lambda_{\tilde h}(\hat{\psi})=\frac{1}{|\det A|}\sum_{k\in \Zd}\hat{h}((A^{T})^{-1}k)\int_{\R^{d}}e^{2\pi i(A^{T})^{-1}k\cdot t }\hat{\psi}(t)dt$$ 
		$$=\frac{1}{|\det A|}\sum_{k\in \Zd}\hat{h}((A^{T})^{-1}k)\psi((A^{T})^{-1}k).$$ The sums are convergent as $\sum_{k\in\Zd}|\hat{h}((A^{T})^{-1}k)|^2<\infty$ and $\psi$ is a Schwartz function. 
	\end{proof}
	Take now some $C_{c}^{\infty}$ function $g$ supported on one of the subsets $(A^{T})^{-1}[-\frac{1}{2},\frac{1}{2}]^{d}+k_{0}$ for some $k_{0}\in \Lambda_{0}$, and let $\psi$ be its inverse Fourier transform; it is a Schwartz function. Let $f=|\det A|\chi_{\Omega}\cdot\psi$ be the restriction to $\Omega$. We will show that $\hat{f}|_{\ati[-\frac{1}{2},\frac{1}{2}]^{d}+\Lambda_{0}}=|\Omega_{0}|\cdot g.$ 
	
	We have $$\hat{f}(t)=|\det A|\hat{\chi}_{\Omega}*\hat{\psi}(t)=|\det A|(\frac{1}{|\det A|}\sum_{k\in \Zd}\hat{\chi}_{\Omega_{0}}((A^{T})^{-1}k)\cdot \delta_{(A^{T})^{-1}k})*g(t)$$ $$=\sum_{k\in \Zd}\hat{\chi}_{\Omega_{0}}((A^{T})^{-1}k)g(t-(A^{T})^{-1}k).$$ We need to restrict this to $(A^{T})^{-1}[-\frac{1}{2},\frac{1}{2}]^{d}+\Lambda_{0}.$ 
	
	Note that $g(t-(A^{T})^{-1}k)$ is supported on $(A^{T})^{-1}[-\frac{1}{2},\frac{1}{2}]^{d}+(A^{T})^{-1}k+k_{0}.$ These sets are disjoint for different $k$. When we restrict to $(A^{T})^{-1}[-\frac{1}{2},\frac{1}{2}]^{d}+\Lambda_{0}$, we keep only the terms with $(A^{T})^{-1}k+k_{0}\in\Lambda_{0}$, so $k=A^{T}(l-k_{0})$ for some $l\in\Lambda_{0}$.
	
	So,
	$$\hat{f}\cdot\chi_{(A^{T})^{-1}[-\frac{1}{2},\frac{1}{2}]^{d}+\Lambda_{0}}(t)=\sum_{l\in \Lambda_{0}}\hat{\chi}_{\Omega_{0}}((A^{T})^{-1}A^{T}(l-k_{0}))\cdot g(t-(A^{T})^{-1}A^{T}(l-k_{0}))=\sum_{l\in \Lambda_{0}}\hat{\chi}_{\Omega_{0}}(l-k_{0})\cdot g(t-(l-k_{0})).$$
	
	Note that by \eqref{5.1}, $\hat{\chi}_{\Omega_{0}}(l-k_{0})=0$ unless $l=k_{0}$, and $\hat\chi_{\Omega_0}(0)=|\Omega_0|$, and thus $$\hat{f}|_{(A^{T})^{-1}[-\frac{1}{2},\frac{1}{2}]^{d}+\Zd}=g(t)\cdot|\Omega_{0}|.$$ It follows that the function $g$ is in the range. 
	
	Since the linear combination of functions  like $g$ are dense, we get that the map is onto.

	(ii)$\Rightarrow$(iii) is trivial.

	(iii)$\Rightarrow$(i). 	
		Since the sets $\Omega_0+Ak$, $k\in\bz^d$ are mutually disjoint, we have, for $k\in\bz^d$, and $l\in\bz^d$, 
	$$|\Omega_0|\delta_k=\ip{e_{(A^T)^{-1}l}\chi_{\Omega_0}}{e_{(A^T)^{-1}l}T_{Ak}\chi_{\Omega_0}}$$
	and using the Fourier transform isometry,
	$$=\int_{(A^T)^{-1}\left[-\frac12,\frac12\right]^d+\Lambda_0}e^{2\pi i Ak\cdot (t-(A^T)^{-1}l)}|\hat\chi_{\Omega_0}(t-(A^T)^{-1}l)|^2\frac{|\det A|}{|\Omega_0|}dt$$
	and since $l\in\bz^d$, 
	$$=\frac{|\det A|}{|\Omega_0|}\sum_{\lambda\in\Lambda_0}\int_{(A^T)^{-1}\left[-\frac12,\frac12\right]^d}e^{2\pi i Ak\cdot (t+\lambda_0)}|\hat\chi_{\Omega_0}(t-(A^T)^{-1}l+\lambda_0)|^2\,dt$$
	and because $\Lambda_0\subseteq (A^T)^{-1}\bz^d$, 
	$$=\frac{|\det A|}{|\Omega_0|}\int_{(A^T)^{-1}\left[-\frac12,\frac12\right]^d}e^{2\pi i Ak\cdot t}\sum_{\lambda\in\Lambda_0}|\hat\chi_{\Omega_0}(t-(A^T)^{-1}l+\lambda_0)|^2\,dt$$
	and with the substitution $A^Tt=u$,
	$$=\frac{1}{|\Omega_0|}\int_{\left[-\frac12,\frac12\right]^d}e^{2\pi i k\cdot u}\sum_{\lambda\in\Lambda_0}|\hat\chi_{\Omega_0}((A^T)^{-1}u-(A^T)^{-1}l+\lambda_0)|^2\,du.$$
	
	Thus, the Fourier coefficients of the function $\sum_{\lambda_0\in\Lambda_0}|\hat\chi_{\Omega_0}((A^T)^{-1}u-(A^T)^{-1}l+\lambda_0)|^2$, $u\in\left[-\frac12,\frac12\right]^d$, are $|\Omega_0|^2\delta_k$, $k\in\bz^d$. 
	
	Therefore 
	$$\sum_{\lambda_0\in\Lambda_0}|\hat\chi_{\Omega_0}((A^T)^{-1}u-(A^T)^{-1}l+\lambda_0)|^2=|\Omega_0|^2,\quad (u\in\left[-\frac12,\frac12\right]^d,l\in\bz^d),$$
	so 
	$$\sum_{\lambda_0\in\Lambda_0}|\hat\chi_{\Omega_0}(t+\lambda_0)|^2=|\Omega_0|^2,\quad(t\in\br^d),$$
	and with Lemma \ref{lemma1}, we get that $\Lambda_0$ is a spectrum for $\Omega_0$.

	\end{proof}

		\begin{remark}
		By Lemma \ref{lemt}, if $\Omega$ has pair measure $c\cdot \Leb_{\ati[-1/2,1/2]^d+\Lambda_0}$, then any translation of this measure is also a pair measure for $\Omega$, and therefore we can replace $[-1/2,1/2]^d$ by $[0,1]^d$ or any cube with side 1. 
	\end{remark}

	The analogous result to Theorem \ref{th1} for tiles is much easier to prove, we include it here. 
	
	\begin{proposition}\label{pr2}
	 Let $\Omega_0$ be an open subset of $\br^d$, 	let $A$ be an invertible $d\times d$ integer matrix and $B$ a subset of $\br^d$. 
	  Assume in addition that the sets $\Omega_0+Ak$, $k\in\bz^d$, are mutually disjoint, and define the set
	   
	  $$\Omega:=\Omega_0+A\bz^d.$$
	  
	  The following statements are equivalent 
	  \begin{enumerate}
	  	\item $\Omega$ tiles $\br^d$ by $B$. 
	  	\item $\Omega_0$ tiles $\br^d$ by $A\bz^d\oplus B$.
	  \end{enumerate}
	  In this case, the sets $A\bz^d+b$, $b\in B$, are mutually disjoint. 
	\end{proposition}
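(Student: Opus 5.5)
The proof is a direct unpacking of the definitions, with the only care needed in the bookkeeping of direct sums. I will use the standing hypothesis that the sets $\Omega_0+Ak$, $k\in\bz^d$, are mutually disjoint up to measure zero. This means $\Omega=\Omega_0\oplus A\bz^d$, i.e., $\Omega_0$ tiles $\Omega$ by $A\bz^d$ in the sense of Definition \ref{deft2}. Both statements then say that a.e.\ point of $\br^d$ is covered exactly once, and I will compare the two ways of counting.

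For (i)$\Rightarrow$(ii): if $\Omega\oplus B=\br^d$, then for a.e.\ $y$ there are unique $x\in\Omega$ and $b\in B$ with $y=x+b$. Writing $x=x_0+Ak$ with $x_0\in\Omega_0$ and $k\in\bz^d$ unique (for a.e.\ $x$, by the packing hypothesis), we get $y=x_0+Ak+b$.

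Uniqueness of the triple $(x_0,k,b)$ follows by combining the two uniqueness statements. Suppose $x_0+Ak+b=x_0'+Ak'+b'$. Then $x_0+Ak$ and $x_0'+Ak'$ both lie in $\Omega$, so $b=b'$ and $x_0+Ak=x_0'+Ak'$, whence $k=k'$ and $x_0=x_0'$.

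Two conclusions follow from this:
\begin{itemize}
\item The map $(k,b)\mapsto Ak+b$ is injective. Indeed, if $Ak+b=Ak'+b'$ with $(k,b)\neq(k',b')$, then every point $x_0+Ak+b$, $x_0\in\Omega_0$, would have two representations. Since $\Omega_0$ is open and nonempty (hence of positive measure), this contradicts uniqueness on a positive-measure set. So $A\bz^d\oplus B$ is a genuine direct sum, which also gives the final claim that the sets $A\bz^d+b$, $b\in B$, are mutually disjoint.
\item $\Omega_0\oplus(A\bz^d\oplus B)=\br^d$ up to measure zero.
\end{itemize}

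For (ii)$\Rightarrow$(i): if $\Omega_0\oplus(A\bz^d\oplus B)=\br^d$, then a.e.\ $y$ is uniquely $x_0+Ak+b$. Grouping $x_0+Ak\in\Omega$ gives existence of a representation $y=x+b$. For uniqueness, suppose $x+b=x'+b'$ with $x,x'\in\Omega$. Decompose $x,x'$ via the packing hypothesis and apply uniqueness of the triple to get $b=b'$.

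The disjointness of the $A\bz^d+b$ is again immediate from the direct-sum notation in (ii): if $Ak+b=Ak'+b'$, the translates $\Omega_0+Ak+b$ and $\Omega_0+Ak'+b'$ coincide and have positive measure, contradicting the tiling.

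The only genuinely delicate point is handling the ``up to measure zero'' qualifiers. The exceptional null sets in $\Omega$ must be transported to null sets in $\br^d$ under the countably many translations, which is fine since a countable union of null sets is null. The other point to watch is using openness of $\Omega_0$ (positive measure, assuming $\Omega_0\neq\emptyset$) to upgrade a.e.\ uniqueness into exact injectivity of $(k,b)\mapsto Ak+b$. Integrality of $A$ is not needed for this argument.
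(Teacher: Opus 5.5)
Your proof is correct and takes essentially the same route as the paper. Both arguments just unpack the two tiling conditions. The paper phrases it as ``$\Omega_0+Ak+b$ and $\Omega_0+Ak'+b'$ intersect non-trivially only if $(k,b)=(k',b')$,'' while you phrase it as a.e.\ uniqueness of the representation $y=x_0+Ak+b$. In both, the positive measure of $\Omega_0$ yields disjointness of the cosets $A\bz^d+b$, and, as you note, the integrality of $A$ is never used.
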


	\begin{proof}
		(i)$\Rightarrow$(ii). Let $k,k'\in\bz^d$ and $b,b'\in B$. Assume that $\Omega_0+Ak+b$ intersects $\Omega_0+Ak'+b'$ non-trivially. Then $\Omega+b$ intersects $\Omega+b'$ non-trivially, so $b=b'$. Then $\Omega+Ak$ intersects $\Omega+Ak'$ non-trivially and, by hypothesis, $k=k'$. In particular, the sets $A\bz^d+b$ and $A\bz^d+b'$ are disjoint. 
		
		Also, $\Omega_0+A\bz^d+B=\Omega+B=\br^d$. Thus, $\Omega_0$ tiles by $A\bz^d\oplus B$.

		(ii)$\Rightarrow$(i). Let $b,b'\in B$. Assume that $\Omega+b$ and $\Omega+b'$ intersect non-trivially. Since $\Omega=\Omega_0+A\bz^d$, this means that, for some $k,k'\in\bz^d$, $\Omega_0+Ak+b$ and $\Omega_0+Ak'+b'$ intersect non-trivially. Since $\Omega_0$ is a tile, we get that $k=k'$ and $b=b'$. 
		
		Also, $\Omega+B=\Omega_0+A\bz^d+B=\br^d$. Thus $\Omega$ tiles $\br^d$ by $B$.
	\end{proof}

	\begin{proof}[Proof of Theorem \ref{th2}]
		(i)$\Leftrightarrow$(ii) follows from Proposition \ref{pr2}: $\Omega$ tiles with $\mathcal R$ if and only if $\Omega_0$ tiles with $A\bz^d\oplus\mathcal R=\bz^d$. 
		Also, by Lemma \ref{lemcg}, if $\Omega_0$ tiles with $\bz^d$ then $|\Omega_0|=1$.

		(ii)$\Rightarrow$(iii). We have $\Omega=\Omega_0\oplus A\bz^d$. 
		If $\Omega_0$ tiles with the lattice $\bz^d$, then by Theorem \ref{thf1}, it follows that $\Omega_0$ has spectrum $\Lambda_0=\bz^d$, and $|\Omega_0|=1$. And then, with Theorem \ref{th1}(i)$\Rightarrow$(ii), we get (iii).
		
		(ii)$\Rightarrow$(iv) is trivial.

		(iv)$\Rightarrow$(ii). Follows from Theorem \ref{th1}(iii)$\Rightarrow$(i).
	\end{proof}

	\begin{remark}
		From the equivalences in Theorem \ref{th2}, we see that under the hypotheses in (iii) we get that $\Omega_0$ has measure 1. But here is a more direct way:

		Assume that  $\Omega=\Omega_0+A\bz^d$ has pair measure $\frac{|\det A|}{|\Omega_0|}\Leb_{\ati[-1/2,1/2]^d+\bz^d}$. We can also see that $\ati[-1/2,1/2]^d+\bz^d$ tiles $\br^d$ with $\ati \tilde {\mathcal R}$, where $\tilde {\mathcal R}$ is any complete set of representatives $\mod A^T\bz^d$. Then, with Lemma \ref{lemc}, we get that $\frac{|\det A|}{|\Omega_0|}=|\tilde{\mathcal R}|=|\det A|$. Thus $|\Omega_0|=1$. 
	\end{remark}

	With Theorem \ref{th2}, we obtain Theorem \ref{thm} as a corollary.

	\begin{proof}[Proof of Theorem \ref{thm}]
		Let $x\in\Omega$. Since $\Omega$ tiles with $\{0,\dots,a-1\}$, we have that $x+a\in \Omega+j$ for some $j\in\{0,\dots,a-1\}$. If $j\neq 0$, then 
		$x+(a-j)\in (\Omega+a-j)\cap \Omega$ , and since $a-j\in\{1,\dots,a-1\}$, this contradicts the fact that $\Omega$ tiles. Thus $x+a\in\Omega$, and $\Omega+a\subseteq\Omega$. 
		
		Similarly, if $x-a\in \Omega+j$ with $1\leq j\leq a-1$, then, with the previous step, $x\in\Omega+a+j\subseteq \Omega+j$, so $\Omega\cap(\Omega+j)\neq \ty$, again, in contradiction with the fact that $\Omega$ tiles. Thus $\Omega-a\subseteq\Omega$, so $\Omega\subseteq \Omega+a$, and this means that $\Omega=\Omega+a$ and $\Omega$ is invariant for $a\bz$. 
		
		Now take $\Omega_0:=\Omega\cap (0,a)$. Then $\Omega=\Omega_0+a\bz$ and the sets $\Omega_0+ak$, $k\in\bz$ are mutually disjoint. Thus, with Theorem \ref{th2}, we get that $\Omega$ is spectral with pair measure  $a\Leb_{[-\frac1{2a},\frac1{2a}]+\bz}$. 
	\end{proof}
	
	\begin{remark}\label{rem1}
		In general, if $A$ is a $d\times d$ integer matrix, $\mathcal R$ is a complete set of representatives $\mod A\bz^d$, and $\Omega$ is some open set in $\br^d$ which tiles by $\mathcal R$, it does not follow that $\Omega$ is also invariant for the lattice $A\bz^d$; this can be seen in Examples \ref{ex1} and \ref{ex2}.
		In the next paragraphs, we present some cases in which, if a set $\Omega$ tiles with a finite set, then $\Omega$ is invariant/periodic. 
	\end{remark}

	\section{Periodicity}\label{sec4}
	According to \cite{New77}, if a finite subset $\bz$ tiles $\bz$ by translations, then it is periodic. We will use this well known result to prove some invariance properties of subsets of $\br$ which tile with a finite set. 
	
	\begin{theorem}\label{thper}
		If a measurable subset $\Omega$ of $\br$ tiles $\br$ with a finite subset $B$ of $\bz$, then  $\Omega$ is periodic, i.e., $\Omega+p=\Omega$, for some positive integer $p$.
	\end{theorem}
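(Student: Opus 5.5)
The plan is to reduce to the discrete setting, fiber by fiber over the unit interval, and apply Newman's theorem \cite{New77} uniformly. For $x\in[0,1)$ set
$$\Omega_x:=\{n\in\bz : x+n\in\Omega\}.$$
Since $B\subseteq\bz$, the tiling $\Omega\oplus B=\br$ holds up to measure zero exactly when, for almost every $x\in[0,1)$, the set $\Omega_x$ tiles $\bz$ with $B$, that is, $\Omega_x\oplus B=\bz$. This follows from Fubini, writing $\br=[0,1)+\bz$: for a.e. $x$, each integer $m$ has a unique representation $m=n+b$ with $n\in\Omega_x$ and $b\in B$.

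Newman's theorem is stated for a finite set tiling $\bz$. Here the finite tile is $B$, and it tiles $\bz$ with the (possibly infinite) set $\Omega_x$. I will use the standard form of the result: if a finite set $B$ tiles $\bz$ with $T$, then $T$ is periodic with a period $p\le 2^{\max B-\min B}$. The argument is that the tiling is determined by the "state" of a window of length $\operatorname{diam}B$, so there are finitely many states and the tiling is eventually periodic in both directions. I expect the main subtlety to lie here: the cited statement should give periodicity of the complementary set $T$, and not merely of $B$. Newman's argument indeed yields that every tiling complement $T$ is periodic with period at most $2^{\operatorname{diam}B}$. If only eventual periodicity is available, I would upgrade it using the fact that the window determines the tiling in both directions (Coven–Meyerowitz style). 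This gives true periodicity with the same bound.

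Next I make the period independent of $x$. Let $N:=\max B-\min B$ and take $p:=\operatorname{lcm}\{1,2,\dots,2^N\}$, or simply $(2^N)!$. For a.e. $x$, $\Omega_x$ has some period $p_x\le 2^N$, and $p_x$ divides $p$. Hence $\Omega_x+p=\Omega_x$ for a.e. $x\in[0,1)$.

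Finally I reassemble. For a.e. $x\in[0,1)$ and all $n\in\bz$, we have $x+n\in\Omega$ if and only if $x+n+p\in\Omega$, so $\Omega+p=\Omega$ up to a null set. The only measure-theoretic care needed is in the Fubini step, namely that the exceptional sets are null. This is routine because $\bz$ is countable: the bad set is a countable union of translates of null sets.
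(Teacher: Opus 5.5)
Your proposal is correct and follows essentially the same route as the paper. You fiber $\Omega$ along integer translates to get discrete tilings $\Omega_x\oplus B=\bz$ for a.e.\ $x$, apply Newman's bound $2^{\max B-\min B}$ on the period of the tiling complement, and pass to a common multiple to obtain a single period $p$ for all fibers: you use $\operatorname{lcm}\{1,\dots,2^N\}$, while the paper uses the product of the finitely many periods that occur. Your handling of the null sets and of the two-sided equality $\Omega_x+p=\Omega_x$ is, if anything, slightly more careful than the paper's write-up.
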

	\begin{proof}
		
			Fix $t\in\br$. Denote 
		$$\mathcal T_t=\{n\in\bz : t+n\in\Omega\}.$$
		We claim that, for a.e. $t\in\br$, $B\oplus\mathcal T_t=\bz$. 
		
			Let $n\in\bz$. Excluding a set of measure zero of values of $t$, there is a unique $b\in B$, such that $t+n\in\Omega+b$. This implies that $t+(n-b)\in\Omega$, which means that $n-b\in\mathcal T_t$ and $n=b+(n-b)\in B+\mathcal T_t$. Thus $B+\mathcal T_t=\bz$. 
		
		If $n=b'+k'$ with $b'\in B$ and $k'\in\mathcal T_t$ then $t+k'\in\Omega$ and therefore $t+n=t+k'+b'\in\Omega+b'$. But we already have that $t+n\in\Omega+b$, and therefore $b=b'$ hence also $k'=n-b'=n-b$. This shows that $B\oplus\mathcal T_t=\bz$.

		By \cite{New77}, $\mathcal T_t$ has a period $P_{t}$, bounded by $2^{\max B-\min B}=:M$ (see also \cite{CM99}). Denote by  $p_1,p_2,\dots, p_r$ {\it all} the numbers that appear as period $P_t$ for any one of the sets $\mathcal T_t$, $t\in\br$. Since these numbers are all less than $M$, we have only finitely many possibilities. Take $p=p_1p_2\dots p_r$; then $p$ is a period for {\it all } sets $\mathcal T_t$, $t\in\br$, i.e., $\mathcal T_t+p=\mathcal T_t$. 
		
		We show that $\Omega$ has period $p$. Take $t\in\Omega$, then $0\in\mathcal T_t$, therefore $p\bz$ is contained in $\mathcal T_t$. This means that $t+pk\in\Omega$ for all $k\in\bz$, and so $\Omega$ is periodic.

	\end{proof}

		\begin{example}
		\label{exco2}
		Theorem \ref{thper} is not true in dimension 2. Here is an example from \cite[page 11]{KoMa10} of a set $\Omega$ which tiles $\br^2$ with a finite set in $\bz^2$, but is not periodic.
		
		First, we define a subset $\Gamma$ of $\bz^2$ which tiles $\bz^2$ with the finite set 
		$$\mathcal R:=\{(0,0),(2,0),(0,2),(2,2)\}.$$
		
		Define $\mathcal T_0=(4\bz)^2$. Clearly $\mathcal T_0\oplus \mathcal R=(2\bz)^2$. 
		
		Now we define two other sets  $\mathcal T_{\uparrow}$ and  $\mathcal T_{\rightarrow}$ which tile $(2\bz)^2$ by $\mathcal R$, but such that $\mathcal T_{\uparrow}$ does not have horizontal periods, and $\mathcal T_{\rightarrow}$ does not have vertical periods. 
		
		For $\mathcal T_{\uparrow}$, just take $\mathcal T_0$ and shift only the column at $0$ up by $2$; so 
		$$\mathcal T_{\uparrow}=(4\bz)^2\setminus\{(0,4k) : k\in\bz\}\cup\{(0,4k+2) : k\in\bz\}.$$
		For $\mathcal T_{\rightarrow}$, take $\mathcal T_0$ and shift only the row at $0$ right by $2$; so
		$$\mathcal T_{\rightarrow}=\mathcal (4\bz)^2\setminus\{(4k,0) : k\in\bz\}\cup\{(4k+2,0): k\in\bz\}.$$
		
		So $\mathcal T_{\uparrow}$ ruins horizontal periods, and $\mathcal T_{\rightarrow}$ ruins vertical periods. All three sets $\mathcal T_0$, $\mathcal T_{\uparrow}$ and $\mathcal T_{\rightarrow}$ tile $(2\bz)^2$ by $\mathcal R$. 
		
		Now, $(2\bz)^2$ tiles $\bz^2$ with $B=\{(0,0),(1,0),(0,1),(1,1)\}$. In each of these four disjoints cosets of $(2\bz)^2$ we will pick one of the $\mathcal T$-sets, to ruin the possible periodicity, as follows:\\
		In $(0,0)+(2\bz)^2$ we take $(0,0)+\mathcal T_0$;\\
		In $(1,0)+(2\bz)^2$ we take $(1,0)+\mathcal T_0$;\\
		In $(0,1)+(2\bz)^2$ we take $(0,1)+\mathcal T_{\uparrow}$;\\
		In $(1,1)+(2\bz)^2$ we take $(1,1)+\mathcal T_{\rightarrow}$.

		We define 
		$$\Gamma=\left((0,0)+\mathcal T_0\right)\cup \left((1,0)+\mathcal T_0\right)\cup \left((0,1)+\mathcal T_{\uparrow}\right)\cup\left((1,1)+\mathcal T_{\rightarrow}\right).$$
		
		We check that $ \Gamma$ tiles $\bz^2$ with $\mathcal R$. We have the disjoint union:
		$$\Gamma\oplus \mathcal R=\left((0,0)+\mathcal T_0\oplus\mathcal R\right)\cup \left((1,0)+\mathcal T_0\oplus\mathcal R\right)\cup \left((0,1)\mathcal T_{\uparrow}\oplus \mathcal R\right)\cup\left((1,1)+\mathcal T_{\rightarrow}\oplus\mathcal R\right)$$
		$$=\{(0,0),(1,0),(0,1),(1,1)\}\oplus(2\bz)^2=\bz^2.$$

		Now take 
		$$\Omega=(0,1)^2\oplus\Gamma.$$
		Clearly $\Omega$ tiles $\br^2$ by $\mathcal R$. We check that $\Omega$ is not periodic.

			\begin{figure}[ht]
			\centering
			\includegraphics[width=0.8\textwidth]{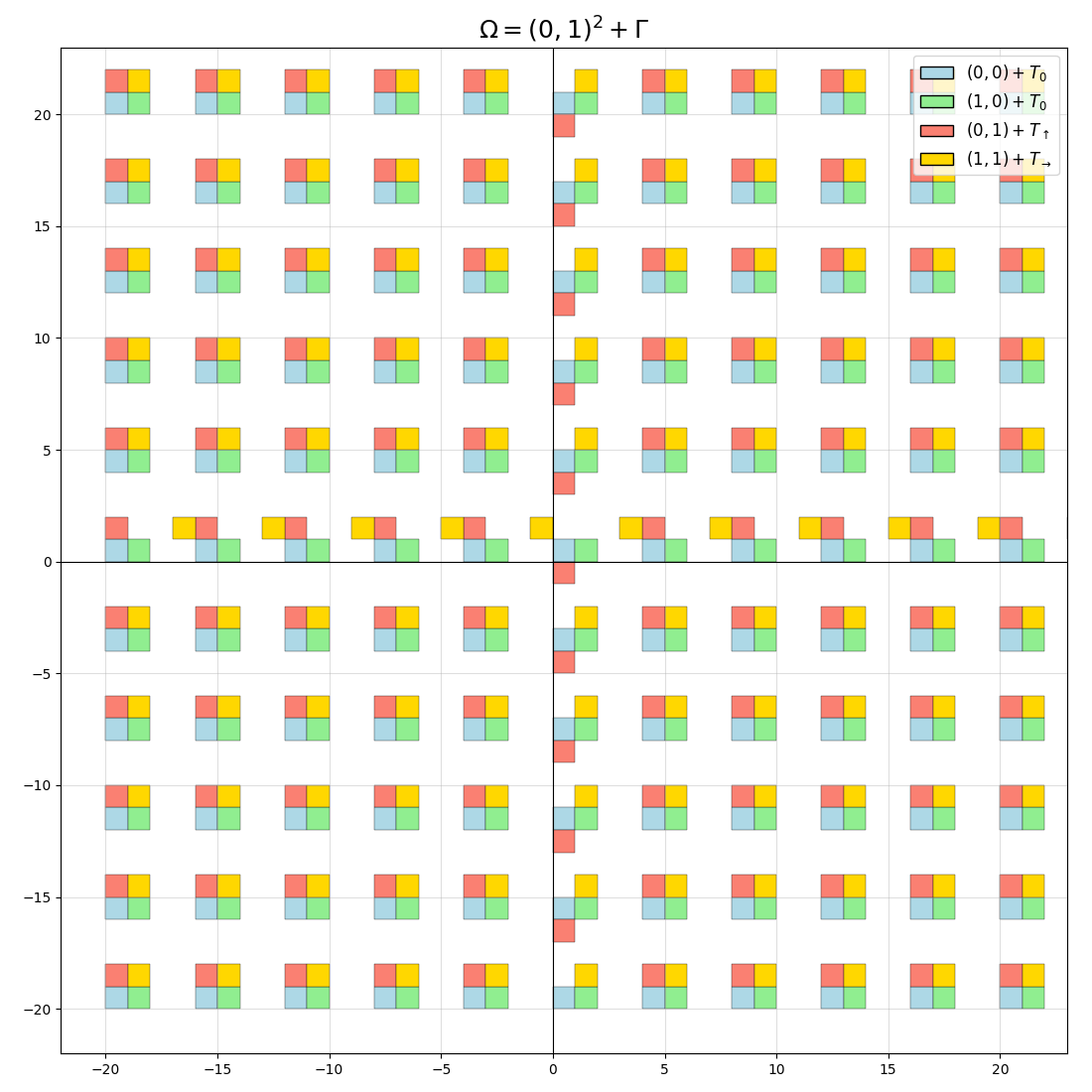}
			\caption{The nonperiodic set $\Omega=(0,1)^2+\Gamma$}
			\label{fig:nonper}
		\end{figure}
		
		Assume by contradiction that $(p_1,p_2)\neq 0$ is a period for $\Omega$. By taking points in $\Omega$ close to the boundary of the squares we can see that $p_1$ and $p_2$ have to be integers. Then $(p_1,p_2)$ is a period for $\Gamma$. 
		
		\newcommand{\circled}[1]{\textcircled{\small #1}}
		
		Note that the points in $\Gamma$ can be put into 4 disjoint buckets:\\
		$\circled 1$: $(4k_1,4k_2)$, $k_1,k_2\in\bz$;\\
		$\circled 2$: $(4k_1+1,4k_2)$ , $k_1,k_2\in\bz$;\\
		$\circled 3$: $(4k_1,4k_2+1)$, $k_1,k_2\in\bz$, $k_1\neq 0$ or $(0,4k_2+3)$, $k_2\in\bz$;\\
		$\circled 4$: $(4k_1+1,4k_2+1)$, $k_1,k_2\in\bz$, $k_2\neq 0$ or $(4k_1+3,1)$, $k_1\in\bz$.

		Note that we cannot have $p_2\equiv 0\mod 4$, because then $(0,3)\in\circled 3$, but $(0,3)+(p_1,p_2)=(p_1,p_2+3)\in\Gamma$ has $(p_2+3)\equiv 3\mod 4$, which forces $p_1=0$. Also $(3,1)\in\circled 4$ so $(3,1)+(0,p_2)=(3,p_2+1)\in\Gamma$, and this forces $p_2=0$.  
		
		We cannot have $p_2\equiv 1\mod 4$, because then $(4,1)\in\circled 3$, so $(4+p_1,1+p_2)\in \Gamma$ has $1+p_2\equiv 2\mod 4$ and so it cannot be in $\Gamma$. 
		
		We cannot have $p_2\equiv 2\mod 4$, because then $(0,0)\in\circled 1$ and so $(p_1,p_2)\in\Gamma$, but since $p_2\equiv 2\mod 4$, this cannot be in $\Gamma$. 
		
		We cannot have $p_2\equiv 3\mod 4$, because then $(0,3)\in\circled 3$ and so $(p_1,3+p_2)\in\Gamma$, but this has $3+p_2\equiv 2\mod 4$, and thus it cannot be in $\Gamma$.
		
		In conclusion $\Gamma$ is not periodic and so neither is $\Omega$.
	\end{example}
	
	Looking at the proof of Theorem \ref{thm}, we see that if $\Omega$ tiles $\br$ by the set $\{0,1,\dots,a-1\}$ for some integer $a\geq 2$, then $\Omega$ has period 2. In general, it is not true that if a set $\Omega$ tiles $\br$ with some complete set of representatives modulo $a$, then $\Omega$ has period $a$, see Examples \ref{ex1} and \ref{ex2}. However if $a=p$ is prime, and the set of complete representatives has greatest common divisor 1, then $\Omega$ will have period $p$. This is stated in the next theorem. 
	
	\begin{theorem}
		\label{thinv}
		Suppose $\Omega$ is a Lebesgue measurable set which tiles $\br$ with a subset $B$ of $\bz$ which has the following properties: $|B|=p$ prime, $0\in B$ and $\gcd(B)=1$. Then $B$ is a complete set of representatives modulo $p$ and $\Omega$ has period $p$.
	\end{theorem}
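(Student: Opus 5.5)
Following the proof of Theorem \ref{thper}, I will reduce to a tiling problem in $\bz$. For $t\in\br$ set $\mathcal T_t=\{n\in\bz: t+n\in\Omega\}$. As shown there, $B\oplus\mathcal T_t=\bz$ for a.e.\ $t$. The plan is to show that every such $\mathcal T_t$ is invariant under $p\bz$. Then, exactly as at the end of the proof of Theorem \ref{thper}, for a.e.\ $t\in\Omega$ we have $0\in\mathcal T_t$, hence $pk\in\mathcal T_t$ for all $k$, so $\Omega+p\subseteq\Omega$ up to measure zero. The same argument with $-p$ gives $\Omega-p\subseteq\Omega$, so $p$ is a period.

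The key input is the structure of tilings of $\bz$ by a set of prime cardinality. By Newman's theorem \cite{New77}, $B\oplus\mathcal T=\bz$ forces $\mathcal T$ to be periodic, say with period $N$. Reducing mod $N$ gives a tiling $B\oplus T=\bz_N$ with $T=\mathcal T\bmod N$. In polynomial language, $B(x)T(x)\equiv\frac{x^N-1}{x-1}\pmod{x^N-1}$. Since $|B|=p$ is prime, the Coven–Meyerowitz/Tijdeman theory (or directly Newman's result for prime power size) gives the following: $B(x)$ is divisible by exactly one cyclotomic polynomial $\Phi_{p^\alpha}$ among the prime-power ones, and $|B|=\prod\Phi_s(1)$ over such $s$. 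Tijdeman's theorem lets us replace $B$ by $B/\gcd$. Because $\gcd(B)=1$ and $0\in B$, I expect to show that the relevant prime power is $p$ itself, i.e.\ $\Phi_p\mid B(x)$, which means the elements of $B$ are distinct mod $p$. This is the claim that $B$ is a complete set of residues mod $p$. A more elementary route, which I would try first, is Tijdeman's dilation lemma: if $B\oplus\mathcal T=\bz$ and $\gcd(r,|B|)=1$, then $rB\oplus\mathcal T=\bz$. Suppose two elements of $B$ agreed mod $p$. Combining the dilation lemma with the observation that $\gcd(B)=1$ means $B$ is not contained in $q\bz$ for any prime $q$ should produce a contradiction by counting residues mod $p$. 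In any case, if $b\equiv b'\pmod p$ with $b\neq b'$, then $B-B$ meets $p\bz\setminus\{0\}$, and the goal is to show this is incompatible with $\Phi_p\mid B(x)$.

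Given that $B$ is a complete residue system mod $p$, I then need $\mathcal T+p=\mathcal T$. Here I would use the cyclotomic characterization: $\Phi_p\mid B(x)$, and since $p\cdot1=|B|$ exhausts the multiplicity, no $\Phi_{p^k}$ with $k\ge2$ divides $B$. From $B(x)T(x)\equiv 0$ modulo $\Phi_s$ for all $s\mid N$, $s>1$, it follows that $T(x)$ is divisible by $\Phi_s$ for every $s\mid N$ with $s\neq p$ and $s>1$. This should imply $T(x)$ is divisible by $\frac{x^N-1}{x^p-1}$ times something, i.e.\ $T$ is a union of cosets of $p\bz_N$ (assuming $p\mid N$, which I can arrange by enlarging $N$ to $pN$). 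Concretely, divisibility by all $\Phi_s$ with $s\mid N$, $s\nmid p$, is equivalent to the function $\chi_T$ having Fourier transform supported on the frequencies of order dividing $p$. That is equivalent to $\chi_T$ being invariant under translation by $N/p$... which is not yet the desired $p$-invariance. So the main obstacle is pinning down the correct period. The more robust route is a direct combinatorial argument in the style of the proof of Theorem \ref{thm}. Since $B$ is a complete residue system mod $p$ and $B\oplus\mathcal T=\bz$, the key lemma needed is the following: for prime $|B|$, every tiling complement $\mathcal T$ of a complete residue system is $p$-periodic. I would prove this by Tijdeman's lemma, replacing $B$ by $rB$ with $r\equiv1\pmod p$ and $r$ coprime to $p$ large enough that $rB\equiv B$ and $rB$ is "spread out". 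Then I would compare $\mathcal T$ to the complements of $\{0,1,\dots,p-1\}$, for which the argument in the proof of Theorem \ref{thm} shows $p$-invariance directly. I expect this last transfer step to be the main difficulty.
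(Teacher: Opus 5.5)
\textbf{There is a genuine gap: the $p$-periodicity of $\mathcal T$ is never proved.} Your reduction to $B\oplus\mathcal T_t=\bz$ is fine and matches the paper. So is the last step, from $p\bz\subseteq\mathcal T_t$ for $t\in\Omega$ to $\Omega+p=\Omega$. The two claims in between are not proved.

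\emph{Completeness of $B$.} This is only announced. It can be finished as follows. If $\Phi_{p^\alpha}\mid B(x)$ with $|B|=p$, then $B$ is a single coset of the order-$p$ subgroup of $\bz_{p^\alpha}$. Since $0\in B$, this gives $B\subseteq p^{\alpha-1}\bz$, and $\gcd(B)=1$ forces $\alpha=1$.

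\emph{Periodicity of $\mathcal T$.} This is the heart of the theorem: a complement $\mathcal T\ni 0$ must satisfy $\mathcal T+p=\mathcal T$, indeed $\mathcal T=p\bz$. None of your routes establishes it.
\begin{enumerate}
\item The ``key lemma'' you end with is false as stated, because it drops $\gcd(B)=1$. The set $\{0,2,4\}$ is a complete residue system mod $3$ and $\{0,2,4\}\oplus\bigl(6\bz\cup(6\bz+1)\bigr)=\bz$. Yet $6\bz\cup(6\bz+1)$ is not invariant under $+3$.
\item The transfer step is not an argument. Tijdeman's theorem keeps $\mathcal T$, but $rB$ is in general not $\{0,1,\dots,p-1\}$, so the argument from the proof of Theorem \ref{thm} cannot be imported. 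Taking $r\equiv 1\pmod p$ only reproduces what you already know mod $p$.
\item In the cyclotomic route, ``$B(x)T(x)\equiv 0 \bmod \Phi_s$, hence $\Phi_s\mid T(x)$'' is justified only for prime powers $s\neq p$. There (T1) of Coven--Meyerowitz gives $\Phi_s\nmid B(x)$. For non-prime-power $s$ the factor can sit in $B$. For example, $B=\{0,1,14,22,28\}$ is a complete residue system mod $5$ with $\gcd(B)=1$, yet $\Phi_{42}\mid B(x)$.
\item Your worry about $N/p$ is a slip. Fourier support of $\chi_T$ on frequencies of order dividing $p$ means $T$ is a union of cosets of $p\bz_N$, i.e.\ invariance under $+p$, which is exactly what you want. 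The real obstacle is the non-prime-power factors.
\end{enumerate}

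\textbf{The paper's route.} Let $N$ be the \emph{minimal} period of $\mathcal T$, and write $\mathcal T=C\oplus N\bz$ with $B\oplus C=\bz_N$. Sands' theorem says that if $\bz_N=B\oplus C$ with $|B|$ a prime power, then $B$ or $C$ is periodic.
\begin{itemize}
\item Minimality of $N$ rules out $C$, so $B$ is a union of cosets of a nontrivial period group $H$.
\item Then $|H|$ divides $p$, so $B=H=\{0,N/p,\dots,(p-1)N/p\}$ in $\bz_N$.
\item Every element of $B$ is divisible by $N/p$, so $\gcd(B)=1$ forces $N=p$ and $C=\{0\}$. Hence $\mathcal T=p\bz$, and completeness of $B$ comes for free from $B\oplus p\bz=\bz$.
\end{itemize}

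\textbf{Rescuing your own tools.} The dilation can be used differently, and then it does work. Take $k\in\bz_N$ of order $s\notin\{1,p\}$. Choose $r$ coprime to $p$ so that $rk$ has order $1$, order $p^a$ with $a\ge 2$, or order $pq$ with $q\neq p$ prime. The coset-counting argument above shows $B(x)$ does not vanish at a primitive root of unity of that order unless $B\subseteq p\bz$ or $B\subseteq q\bz$. In the $pq$ case this uses $\Phi_p\Phi_{pq}=(x^{pq}-1)/(x^q-1)$ together with $\Phi_p\mid B(x)$. Tijdeman's theorem gives $rB\oplus C=\bz_N$, so the Fourier transform of $\chi_C$ must vanish at $k$. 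That vanishing for all such $k$ is precisely the $p$-periodicity of $C$.
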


	\begin{proof}
		Fix $t\in\br$. Denote 
		$$\mathcal T_t=\{n\in\bz : t+n\in\Omega\}.$$
		As we saw in the proof of Theorem \ref{thper}, we have that $B\oplus \mathcal T_t=\bz$. 
		
		We need a lemma 	{(see also \cite[Theorem 2]{Tij95})}:

		\begin{lemma}
			\label{lem16}
			
			Suppose $B\oplus\mathcal T=\bz$, $|B|=p$ prime, $0\in B\cap\mathcal T$, $\gcd(B)=1$. Then $\mathcal T=p\bz$ and $B$ is a complete set of representatives modulo $p$. 
		\end{lemma}
		
		\begin{proof}
			We know from \cite{New77} that $\mathcal T$ has to be periodic and let $N$ be its minimal period. Then $\mathcal T=C\oplus N\bz$ and $B\oplus C=\bz_N$, $0\in C$, for some finite set $C\subseteq\{0,1,\dots,N-1\}$. 
			
			We use the following theorem:

			\begin{theorem}\cite[page 69]{San57}
				\label{thsan}
				If $B\oplus C=\bz_N$ and $|B|=p^\lambda$ for some prime number $p$ and some $\lambda\in\bn$, then either $B$ or $C$ is periodic. 
			\end{theorem}
			Since $|B|=p$ is prime, by Theorem \ref{thsan}, it follows that either $B$ or $C$ is periodic in $\bz_N$. However $C$ cannot be periodic, because that would contradict the fact that $N$ is the minimal period of $\mathcal T$. Thus $B$ has to be periodic. Let $H$ be the subgroup of periods of $B$, in $\bz_N$. Then $B$ is a union of cosets of $H$ and therefore the order of $H$ has to divide the cardinality of $B$, which is $p$. Thus $|H|=|B|=p$ and, since $0\in B$, $B=H$. But, the only subgroup of order $p$ of $\bz_N$ is $\{0,N/p,2N/p,\dots,(p-1)N/p\}$, and so $B= \{0,N/p,2N/p,\dots,(p-1)N/p\}$ modulo $N$. But then, all elements in $B$ are divisible by $N/p$, and since $\gcd(B)=1$, we must have $N=p$. Finally, this implies that $|C|=1$ and so $C=\{0\}$, therefore $\mathcal T=p\bz$. Since $B\oplus p\bz=\bz$, it follows that $B$ is a complete set of representatives modulo $p$.
		\end{proof}

		Consider now $t\in\Omega$, so $0\in\mathcal T_t$. With Lemma \ref{lem16}, we obtain that $\mathcal T_t=p\bz$. This means $t+pk\in\Omega$ for all $k\in\bz$. This shows that $\Omega+p\bz\subseteq\Omega$ and the conclusion follows. 
	\end{proof}

	\section{Examples}\label{sec5}

	\begin{example}\label{ex1}
		Here is an example of a set $\Omega$ which tiles by the complete set of representatives modulo 2 $\mathcal R=\{0,3\}$, but is not invariant under addition by 2, $\Omega+2\neq \Omega$, 
		$$\Omega=(0,3)+6\bz.$$
		
	 We can apply Theorem \ref{th1} with $\Omega_0=(0,3)$ with spectrum $\Lambda_0=\frac13\bz$ and $A=6$. We obtain that $\Omega$ is spectral with pair measure $2\Leb_{[-\frac1{12},\frac1{12}]+\frac13\bz}$.

		Note that this set $\Omega=(0,3)+6\bz$ does not have as pair measure the renormalized Lebesgue measure on $[-1/4,1/4]+\bz$, as $\Omega'=(0,1)+2\bz$ does (Theorem \ref{thm}), even though both these sets tile $\br$ by $\{0,3\}$.
		
		Indeed, if we take the characteristic function $\chi_{(0,1)}$ and its translation $\chi_{(1,2)}$, we have $\ip{\chi_{(0,1)}}{\chi_{(1,2)}}=0$. Assuming that the Fourier transform is isometric from $L^2(\Omega)$ into $L^2(2\Leb_{[-\frac1{4},\frac1{4}]+\bz})$, we would get that 
		$\ip{\hat\chi_{(0,1)}}{e^{-2\pi it}\hat\chi_{(0,1)}}=0$. 

		We have 
		$$\hat\chi_{(0,1)}(t)=e^{-\pi it}\sinc(\pi t)=e^{-\pi i t}\frac{\sin \pi t}{\pi t},\quad(t\in\br).$$
		Since $\bz$ is a spectrum for $(0,1)$, with Lemma \ref{lemma1} we have that 
				$$1=\sum_{k\in\bz}|\hat\chi_{(0,1)}(t+k)|^2=\sum_{k\in\bz}\sinc^2(\pi (t+k)),\quad(t\in\br).$$
	
	Then 
	$$0=\ip{\hat\chi_{(0,1)}}{e^{-2\pi it}\hat\chi_{[0,1]}}=\int_{[-1/4,1/4]+\bz}e^{2\pi it}\sinc^2(\pi t)\,dt=\sum_{k\in\bz}\int_{-1/4}^{1/4}e^{2\pi i (t+k)}\sinc^2(\pi (t+k))\,dt$$
	$$=\int_{-1/4}^{1/4}e^{2\pi it}\sum_{k\in\bz}\sinc^2(\pi (t+k))\,dt=\int_{-1/4}^{1/4}e^{2\pi it}\,dt=\frac{e^{2\pi i/4}-e^{-2\pi i/4}}{2\pi i}\neq 0,$$
		a contradiction. Thus $2\Leb_{[-\frac1{4},\frac1{4}]+\bz}$ is not a pair measure for $(0,3)+6\bz$.
		
	\end{example}

		We generalize the idea in Example \ref{ex1} into a lemma, which gives a necessary condition for a measure to be a pair measure for a set $\Omega$. 
	
	\begin{lemma}
		\label{lemnec}
		Let $\Omega$ be a spectral measurable subset of $\br^d$ with pair measure of the form $\nu_0\ast\Sha_{\Lambda_0}$ where $\nu_0$ is a finite positive Radon measure on $\br^d$ and $\Lambda_0$ is a countable subset of $\br^d$. 
		
		Assume in addition, that there is a subset $E$ of $\br^d$ of finite measure and $a\in\br^d$ with the following properties:
		
		\begin{enumerate}
			\item Both $E$ and $E+a$ are subsets of $\Omega$. 
			\item The set $E$ has spectrum $\Lambda_E$, and $\Lambda_E$ tiles $\Lambda_0$ with a finite set $F$. 
			\item $\lambda_E\cdot a\in\bz$ for all $\lambda_E\in\Lambda_E$.
		\end{enumerate}
		Then  
		$$\frac{|E\cap(E+a)|}{|E|^2}=\hat\nu_0(a)\sum_{f\in F}e^{-2\pi i a\cdot f}.$$
	\end{lemma}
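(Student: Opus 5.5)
Following Example \ref{ex1}, I would test the isometry on the pair of functions $\chi_E$ and $\chi_{E+a}$, both of which lie in $L^2(\Omega)$ by (i). Write $\mu=\nu_0\ast\Sha_{\Lambda_0}$. Their inner product in $L^2(\Omega)$ is $|E\cap(E+a)|$. Since $\hat\chi_{E+a}(t)=e^{-2\pi i a\cdot t}\hat\chi_E(t)$, the isometry (and polarization) gives
$$|E\cap(E+a)|=\int \hat\chi_E(t)\overline{e^{-2\pi i a\cdot t}\hat\chi_E(t)}\,d\mu(t)=\int e^{2\pi i a\cdot t}|\hat\chi_E(t)|^2\,d\mu(t).$$
I will read $\hat\nu_0(a)$ with the sign convention making $\int e^{2\pi i a\cdot s}d\nu_0(s)$ appear (or absorb this via $a\mapsto -a$ symmetry); I will fix the convention when writing the details, since the stated identity should come out matching $\hat\nu_0(a)=\int e^{-2\pi i a\cdot s}\,d\nu_0(s)$ after conjugating both sides. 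The left side is real, so conjugation is harmless.

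Next I unfold the convolution:
$$\int g\,d\mu=\sum_{\lambda\in\Lambda_0}\int g(s+\lambda)\,d\nu_0(s).$$
By (ii), $\Lambda_0=\Lambda_E\oplus F$, so each $\lambda$ is written uniquely as $\lambda_E+f$. Hence
$$\int e^{2\pi i a\cdot t}|\hat\chi_E(t)|^2\,d\mu(t)=\int\sum_{f\in F}\sum_{\lambda_E\in\Lambda_E}e^{2\pi i a\cdot(s+f+\lambda_E)}|\hat\chi_E(s+f+\lambda_E)|^2\,d\nu_0(s).$$
By (iii), $e^{2\pi i a\cdot\lambda_E}=1$. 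By Lemma \ref{lemma1}, applied to $\Leb_E$ with spectrum $\Lambda_E$,
$$\sum_{\lambda_E}|\hat\chi_E(s+f+\lambda_E)|^2=|E|^2$$
for every point. The expression therefore collapses to
$$|E|^2\sum_{f\in F}e^{2\pi i a\cdot f}\int e^{2\pi i a\cdot s}\,d\nu_0(s).$$
Dividing by $|E|^2$ and taking complex conjugates (the left side being real) yields the claimed identity.

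The main technical point is justifying the interchange of the sum over $\Lambda_0$, the integral against $\nu_0$, and the splitting $\Lambda_0=\Lambda_E\oplus F$. This is fine by Tonelli/Fubini, because the integrand is dominated by $|\hat\chi_E|^2$. Its total integral is finite, equal to $|F|\,|E|^2\nu_0(\br^d)$ by the same computation with absolute values, since $\nu_0$ is finite. One also needs that $\chi_E\in L^1\cap L^2(\Omega)$, so that $\mathscr F$ acts by the classical Fourier transform; this holds because $|E|<\infty$. The rest is bookkeeping of the sign convention in $\hat\nu_0$.
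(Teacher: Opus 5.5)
Your proof is correct and follows the paper's own argument. You test the isometry on $\chi_E,\chi_{E+a}$, unfold $\nu_0\ast\Sha_{\Lambda_0}$, split $\Lambda_0=\Lambda_E\oplus F$, remove the factors $e^{\pm 2\pi i a\cdot\lambda_E}$ using (iii), and collapse the inner sum to $|E|^2$ with Lemma \ref{lemma1}. The differences are cosmetic: the paper pairs $\langle\hat\chi_{E+a},\hat\chi_E\rangle$, so $\hat\nu_0(a)$ appears directly without your conjugation step, and you additionally supply the Fubini/Tonelli justification that the paper leaves implicit.
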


	\begin{proof}
		We have 
		$$|E\cap(E+a)|=\ip{\chi_{E+a}}{\chi_E}_{L^2(\Omega)}=\ip{\hat\chi_{E+a}}{\hat\chi_{E}}_{L^2(\nu_0\ast\Sha_{\Lambda_0})}=\int e^{-2\pi ia\cdot x}|\hat\chi_E(x)|^2\,d\nu_0\ast\Sha_{\Lambda_0}(x)$$
		$$=\int\sum_{\lambda\in\Lambda_0}e^{-2\pi ia\cdot(t+\lambda)}|\hat\chi_E(t+\lambda)|^2\,d\nu_0(t)
		=\int\sum_{f\in F}\sum_{\lambda_E\in\Lambda_E}e^{-2\pi ia\cdot(t+f+\lambda_E)}|\hat\chi_E(t+f+\lambda_E)|^2\,d\nu_0(t)$$
		$$=\int\sum_{f\in F}e^{-2\pi ia\cdot(t+f)}\sum_{\lambda_E\in\Lambda_E}|\hat\chi_E(t+f+\Lambda_E)|^2\,d\nu_0(t)$$
		but, with Lemma \ref{lemma1}, $\sum_{\lambda_E}|\chi_E(t+f+\lambda_E)|^2=|E|^2$, so we get further
		$$=|E|^2\int\sum_{f\in F}e^{-2\pi ia\cdot (t+f)}\,d\nu_0(t)=|E|^2\sum_{f\in F}e^{-2\pi i a\cdot f} \int e^{-2\pi ia\cdot t}\,d\nu_0(f)$$
		$$=|E|^2\hat\nu_0(a)\sum_{f\in F}e^{-2\pi ia\cdot f}.$$
		The conclusion follows.
		
	\end{proof}

	\begin{example}\label{ex2}
		Here is an example of a complete set of representatives modulo 4: 
		$$\mathcal R=\{0,5,6,11\},$$
		with greatest common divisor equal to 1, and a set $\Omega$ which tiles with $\mathcal R$ but it is not invariant under addition by 4, $\Omega+4\neq \Omega$, where $\Omega$ is defined by:
		$$B_0:=\{0,2,4,12,14,16\},\quad B=B_0+24\bz,\quad \Omega=B+(0,1).$$
		
		We can apply Theorem \ref{th1} with $\Omega_0=(0,1)+B_0$, $A=24$. With Lemma \ref{lemco}, since $B_0=2\{0,1,2\}+12\{0,1\}$, and since $\Sha_{2\{0,1,2\}}$ has spectrum $\frac16\{0,1,2\}$ and $\Sha_{12\{0,1\}}$ has spectrum $\frac1{24}\{0,1\}$, we get that $\Sha_{B_0}$ has spectrum $\frac16\{0,1,2\}+\frac1{24}\{0,1\}=\frac1{24}\{0,1,4,5,8,9\}$. With Lemma \ref{lemco} again, we obtain that $\Omega_0=(0,1)+B_0$ has spectrum $\Lambda_0=\bz+\frac1{24}\{0,1,4,5,8,9\}$.
		
		Then $\Omega=\Omega_0+24\bz$ is spectral with pair measure $4\Leb_{[-\frac1{48},\frac1{48}]+\bz+\frac1{24}\{0,1,4,5,8,9\}}$.

		Note that $B_0\oplus \mathcal R=\bz_{24}$. And from this, we get that $B\oplus \mathcal R=B_0\oplus\mathcal R\oplus 24\bz=\bz$ and then $\Omega\oplus \mathcal R=B+(0,1)+\mathcal R=\bz+(0,1)=\br.$ Thus $\Omega$ tiles $\br$ with $\mathcal R$. 
		
		If we take $\Omega'=(0,1)+4\bz$, then $\Omega'$ tiles $\br$ with any complete set of representatives mod 4, in particular with $\mathcal R$ and with $\{0,1,2,3\}$. Then, with Theorem \ref{thm}, $\Omega'$ has pair measure $4\Leb_{[-1/8,1/8]+\bz}$. We show that this is {\it not} a pair measure for $\Omega$. Assuming, by contradiction that this is the case, we can use Lemma \ref{lemnec} with $\nu_0=4\Leb_{[-1/8,1/8]}$, $\Lambda_0=\bz$, $E=(0,1)$ with spectrum $\Lambda_E=\bz$, and the finite set $F=\{0\}$ and $a=2$. 
		
		Since $E\cap (E+2)=\ty$, we should have either $0=\sum_{f\in F}e^{-2\pi ia\cdot f}$  or $\hat\nu_0(2)=0$. We do not have the former since $\sum_{f\in F}e^{-2\pi ia\cdot f}=e^{-2\pi i2\cdot 0}=1$. We do not have the latter since  
		$$\int_{-1/8}^{1/8}e^{-2\pi i2t}\,dt=\frac{e^{-2\pi i\frac18}-e^{+2\pi \frac18}}{-4\pi i}\neq 0.$$

	\end{example}

	\begin{example}
		\label{exfu}
		Suppose now we want, in Theorem \ref{th1}, to take $\Omega_0$ to be a fundamental domain $F_B$ for some lattice $B\bz^d$, where $B$ is a real $d\times d$ invertible matrix and then translate by another lattice $A\bz^d$. We will need $F_B$ to pack with the lattice $A\bz^d$. Since $F_B$ tiles with $B\bz^d$, with Fuglede's Theorem \ref{thf1}, $F_B$ has spectrum $\Lambda_0=(B^T)^{-1}\bz^d$, the dual lattice to $B\bz^d$. We need to make sure that the condition \eqref{eq1.2} is also satisfied: $(B^T)^{-1}\bz^d\subseteq \ati\bz^d$. We have the following lemma:
			\begin{figure}[ht]
			\centering
			\includegraphics[width=0.8\textwidth]{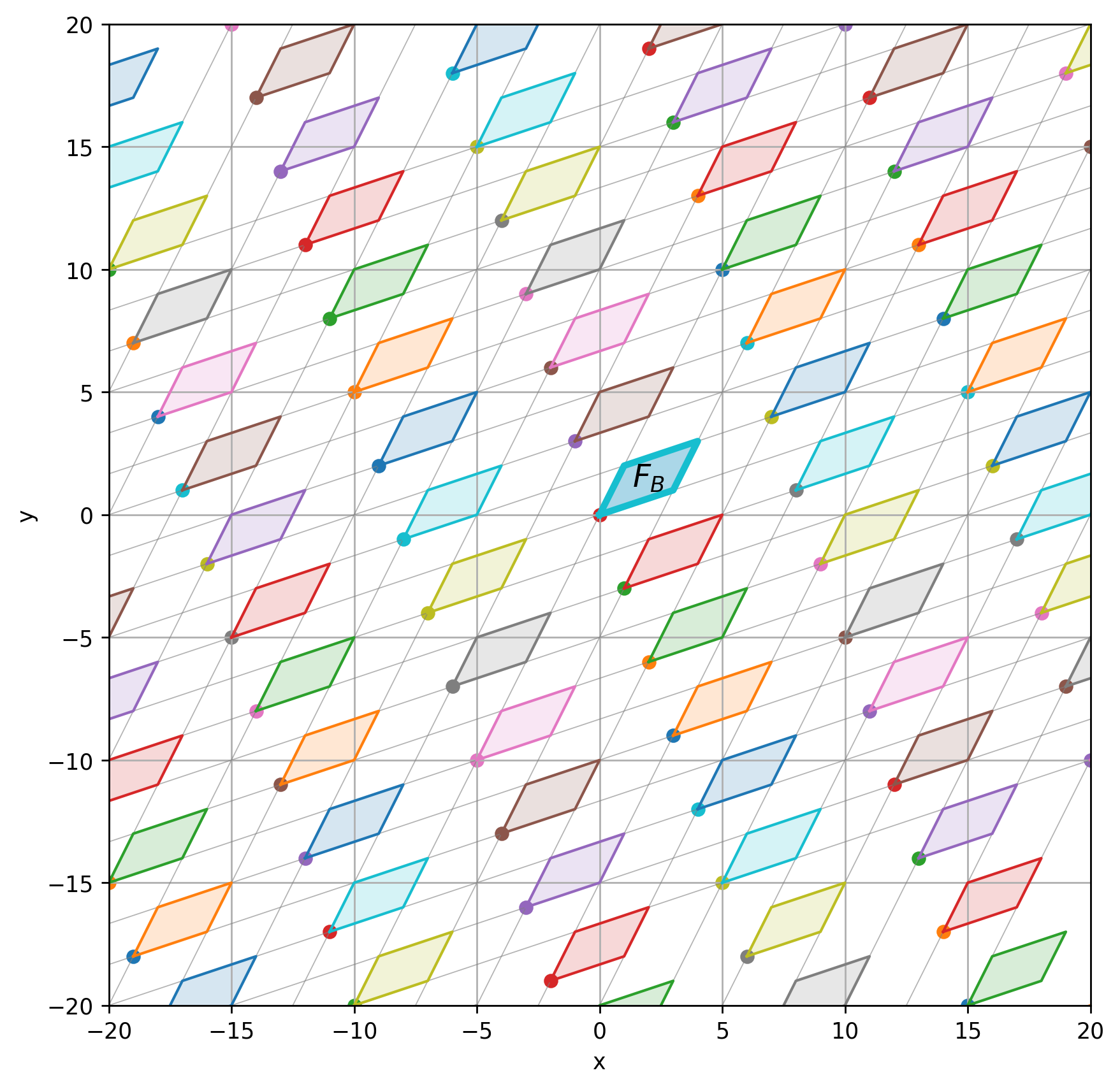}
			\caption{The set $F_B + A\mathbb{Z}^2$ together with the lattice $B\mathbb{Z}^2$ shown by the intersection points of the thin grey lines. The highlighted parallelogram is the fundamental domain $F_B$. The dots represent the lattice $A\bz^2$.}
			\label{fig:FB-AZ2-example5}
		\end{figure}

		\begin{figure}[ht]
			\centering
			\includegraphics[width=0.82\textwidth]{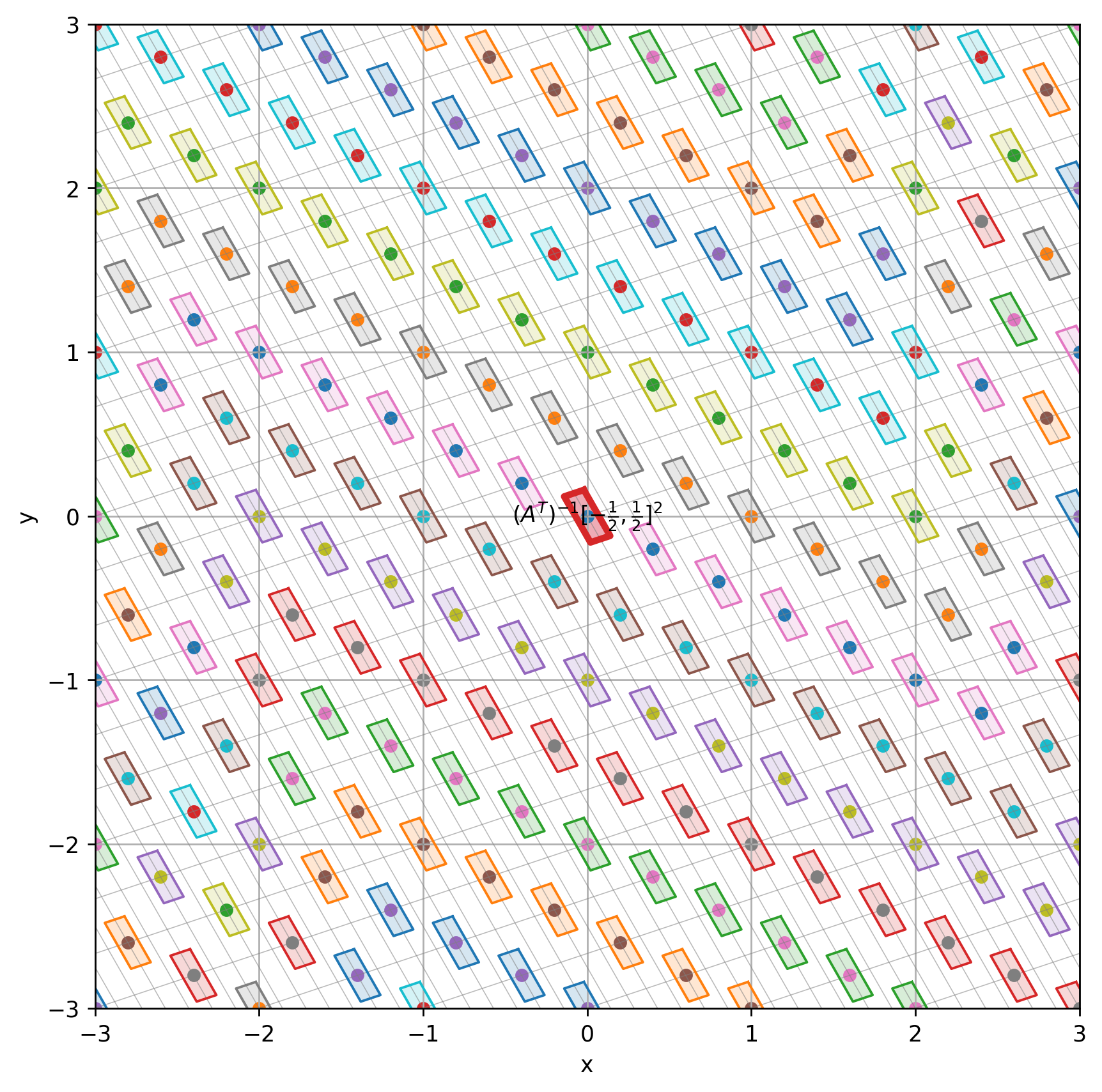}
			\caption{
				The translates $(A^T)^{-1}\Bigl[-\frac12,\frac12\Bigr]^2+(B^T)^{-1}\mathbb Z^2$. The thin grey lines represent the lattice $(A^T)^{-1}\mathbb Z^2$. The dots represent the lattice $(B^T)^{-1}\bz^2$. 
			}
			\label{fig:dual-lattice-tiling}
		\end{figure}
		
		\end{example}
		
		\begin{lemma}
			\label{lemexfu}
			The following statements are equivalent:
			\begin{enumerate}
				\item  $(B^T)^{-1}\bz^d\subseteq \ati\bz^d$.
				\item $B^{-1}A$ has integer entries. 
				\item $A=BM$ for some integer $d\times d$ matrix $M$. 
				\item The lattice $A\bz^d$ is contained in the lattice $B\bz^d$. 
			\end{enumerate}
			
			In this case a fundamental domain $F_B$ for the lattice $B\bz^d$ packs with the lattice $A\bz^d$. 
		\end{lemma}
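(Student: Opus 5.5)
The plan is to prove the chain (i)$\Leftrightarrow$(ii)$\Leftrightarrow$(iii)$\Leftrightarrow$(iv) by elementary linear algebra, and then deduce the packing statement from (iv).

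For (i)$\Leftrightarrow$(ii), condition (i) says that $(B^T)^{-1}e_j\in\ati\bz^d$ for each standard basis vector $e_j$. Since $\bz^d$ is generated by the $e_j$ and $\ati\bz^d$ is a group, this is equivalent to $A^T(B^T)^{-1}\bz^d\subseteq\bz^d$. In turn, that is equivalent to the matrix $A^T(B^T)^{-1}=(B^{-1}A)^T$ having integer entries, because its columns are its images of the $e_j$. Finally, a matrix has integer entries if and only if its transpose does.

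For (ii)$\Leftrightarrow$(iii), set $M=B^{-1}A$; then $A=BM$. Conversely, if $A=BM$ with $M$ integer, then $M=B^{-1}A$.

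For (iii)$\Leftrightarrow$(iv), if $A=BM$ with $M$ integer, then $A\bz^d=B(M\bz^d)\subseteq B\bz^d$. Conversely, if $A\bz^d\subseteq B\bz^d$, then each column $Ae_j=Bm_j$ for some $m_j\in\bz^d$. Letting $M$ be the integer matrix with columns $m_j$ gives $A=BM$.

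For the last claim, assume (iv) and let $F_B$ be a fundamental domain for $B\bz^d$. Suppose $(F_B+Ak)\cap(F_B+Ak')$ has positive measure for some $k\neq k'$. Since $A$ is invertible, $Ak\neq Ak'$, and both $Ak$ and $Ak'$ are elements of $B\bz^d$. This contradicts the fact that the translates of $F_B$ by distinct elements of $B\bz^d$ are disjoint up to measure zero. Hence $F_B$ packs with $A\bz^d$.

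There is no real obstacle here. The only point needing care is the transpose bookkeeping in (i)$\Leftrightarrow$(ii): the condition $\lambda\cdot Ak\in\bz$ translates into integrality of $(B^{-1}A)^T$ rather than of $B^{-1}A$ itself.
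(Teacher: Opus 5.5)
Your proposal is correct and follows essentially the same route as the paper, which multiplies (i) by $A^T$ to reduce it to integrality of $A^T(B^T)^{-1}=(B^{-1}A)^T$, then passes through $M=B^{-1}A$ and $B^{-1}A\bz^d\subseteq\bz^d$ to reach $A\bz^d\subseteq B\bz^d$, and gets packing from the tiling of $F_B$ by the larger lattice $B\bz^d$. You simply spell out steps the paper leaves implicit, namely the column argument for integrality and the injectivity of $k\mapsto Ak$.
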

		
		\begin{proof}
			We have  $(B^T)^{-1}\bz^d\subseteq \ati\bz^d$ iff $A^T(B^T)^{-1}\bz^d\subseteq \bz^d$ iff $A^T(B^T)^{-1}=(B^{-1}A)^T$ has integer entries iff $M=B^{-1}A$ has integer entries iff $B^{-1}A\bz^d\subseteq \bz^d$ iff $A\bz^d\subseteq B\bz^d$. 
			
			If these conditions are satisfied then, since $F_B$ tiles with $B\bz^d$ and $A\bz^d \subseteq B\bz^d$, it follows that $F_B$ packs with $A\bz^d$. 
		\end{proof}
		
		Note that the measure of $\Omega_0=F_B$ is $|\Omega_0|=|F_B|=|\det B|$, so 
		$$\frac{|\det A|}{|\Omega_0|}=\frac{|\det A|}{|\det B|}=|\det M|=[A\bz^d : B\bz^d]=\mbox{ the index of $B\bz^d$ in $A\bz^d$}.$$
		
		Thus, if these conditions are satisfied then, with Theorem \ref{th1}, the set $\Omega=F_B+A\bz^d$ is spectral with pair measure 
		
		$$[A\bz^d : B\bz^d]\Leb_{\ati[-1/2,1/2]^d+(B^T)^{-1}\bz^d}.$$

		For example, let

		$$
		B=
		\begin{pmatrix}
			3 & 1\\
			1 & 2
		\end{pmatrix},
		\qquad
		M=
		\begin{pmatrix}
			2 & -1\\
			1 & 2
		\end{pmatrix}		
		\qquad		
		A=BM=
		\begin{pmatrix}
			7 & -1\\
			4 & 3
		\end{pmatrix}
		$$. 
		
		The set $\Omega=F_B+A\bz^2$ is represented in Figure \ref{fig:FB-AZ2-example5}; the support of the pair measure is represented in Figure \ref{fig:dual-lattice-tiling}. The renormalization constant is $|\det M|=5$.

	\begin{example}
		\label{exsa}
		A particularly appealing class of examples of sets $\Omega_0$ for Theorem \ref{th1} or Theorem \ref{th2} can be obtained from self-affine tiles (see \cite{LW97} for the definitions and details for what follows in this example). An {\it integer self-affine tile} is associated to a expansive $d\times d$ matrix $R$ (expansive means all eigenvalues have absolute value strictly greater than 1), and a set of digits $\mathcal D$ in $\bz^d$ with $|\mathcal D|=|\det R|$. Then there is a unique compact set $T=T(R,\mathcal D)$ that satisfies the equality 
		$$R(T)=\bigcup_{d\in\mathcal D}(T+d),$$
		which is given explicitly by 
		$$T(R,\mathcal D):=\left\{\sum_{k=1}^\infty R^{-k}d_k : \mbox{ all }d_k\in\mathcal D\right\}.$$
		The set $T(R,\mathcal D)$ is called a {\it self-affine tile} if it has positive Lebesgue measure. 
		
		Associated to an integral pair $(R,\mathcal D)$, is the smallest $R$-invariant sublattice $\bz[R,\mathcal D]$ of $\bz^d$ that contains the difference set $\mathcal D-\mathcal D$. When $0\in\mathcal D$, this sublattice is 
		$$\bz[R,\mathcal D]=\bz[\mathcal D,R(\mathcal D),\dots, R^{d-1}\mathcal D].$$
		A pair $(R,\mathcal D)$ is called {\it primitive} if $\bz[R,\mathcal D]=\bz^d$, and $\mathcal D$ is called a {\it primitive digit set} for $R$. 
		
		The study can be reduced to primitive digit sets: there is another integral self-affine tile $\tilde T=T(\tilde R,\tilde D)$ with $(\tilde R,\tilde D)$ primitive and $0\in\tilde D$, such that for some integer $d\times d$ matrix $B$ (whose columns form a basis for $\bz[R,\mathcal D]$) with $\det B\neq 0$, and some $v\in\bz^d$, 
		$$T=B(\tilde T)+v.$$
		
		A primitive digit set is called {\it standard} if it forms a complete set of residues ($\mod R$). 
		
		Theorem 1.1 in \cite{LW97} states that: 
		\begin{theorem}
			\label{thlw}
			Every integral self-affine tile $T$ coming from a standard digit set tiles $\br^d$ by some lattice $\Gamma\subseteq \bz^d$. 
		\end{theorem}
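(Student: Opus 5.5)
The plan is to first show that $T=T(R,\mathcal D)$ tiles $\br^d$ by $\bz^d$ \emph{with some constant multiplicity}, and then upgrade this multi-tiling to a genuine lattice tiling using a Fourier criterion. By the reduction recalled above, $T=B(\tilde T)+v$ with $(\tilde R,\tilde{\mathcal D})$ primitive. If $\tilde T$ tiles by a lattice $\tilde\Gamma\subseteq\bz^d$, then $T$ tiles by $B\tilde\Gamma\subseteq\bz^d$, since $B$ is an integer matrix. So we may assume $(R,\mathcal D)$ is primitive and standard with $0\in\mathcal D$.

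\emph{Step 1 (multi-tiling).} Put $\mathcal D_n=\mathcal D+R\mathcal D+\dots+R^{n-1}\mathcal D$. Iterating $R(T)=T+\mathcal D$ gives $R^nT=T+\mathcal D_n$. Because $\mathcal D$ is a complete set of residues mod $R\bz^d$, $\mathcal D_n$ is a complete set of residues mod $R^n\bz^d$, with the sums direct. Hence
$$R^n(T+\bz^d)=T+\mathcal D_n+R^n\bz^d=T+\bz^d,$$
with multiplicities preserved. Consider the covering function $m(x)=\#\{k\in\bz^d: x\in T+k\}$. It is $\bz^d$-periodic, and the display shows $m(Rx)=m(x)$ a.e. The map $x\mapsto Rx$ on $\br^d/\bz^d$ is ergodic because $R$ is expansive. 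Therefore $m$ is a.e. constant, and integrating over $[0,1)^d$ shows the constant equals $|T|$. In particular $|T|$ is a positive integer and $\hat\chi_T(k)=0$ for all $k\in\bz^d\setminus\{0\}$.

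\emph{Step 2 (Fourier criterion).} The goal is a lattice $\Gamma\subseteq\bz^d$ of covolume $|T|$ whose dual $\Gamma^*\supseteq\bz^d$ satisfies $\hat\chi_T=0$ on $\Gamma^*\setminus\{0\}$. By Theorem \ref{thf1} together with Lemma \ref{lemma1}, this is equivalent to $T$ tiling by $\Gamma$. To find candidates for $\Gamma^*$, use the product formula
$$\hat\chi_T(\xi)=|T|\prod_{j\ge1}m_{\mathcal D}\big((R^T)^{-j}\xi\big),\qquad m_{\mathcal D}(\xi)=\frac1{|\mathcal D|}\sum_{d\in\mathcal D}e^{-2\pi i d\cdot\xi}.$$
Its zero set is $\bigcup_{j\ge1}(R^T)^{j}Z(m_{\mathcal D})$, where $Z(m_{\mathcal D})$ is the zero set of $m_{\mathcal D}$. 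By the standard-digit hypothesis, $Z(m_{\mathcal D})$ contains $(R^T)^{-1}\bz^d\setminus\bz^d$. Since the zero set is $R^T$-invariant, I would look for $\Gamma^*$ among the lattices generated by $\bz^d$ and finitely many points of this zero set that are invariant under $R^T$ modulo $\bz^d$. The index $[\Gamma^*:\bz^d]$ should be pinned to $|T|$ by the multiplicity from Step 1.

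\emph{Main obstacle.} The hard part is Step 2: passing from "$T+\bz^d$ covers exactly $|T|$ times" to the existence of an honest tiling lattice. The Fourier transform only controls $\bz^d$, and an arbitrary multi-tile need not tile by any lattice. What I would try first is a combinatorial route. Study the finite set $\mathcal K=\{k\in\bz^d:|T\cap(T+k)|>0\}$ and show, using $R^n(T+\bz^d)=T+\bz^d$ and primitivity, that one can choose a sublattice $\Gamma$ of index $|T|$ in $\bz^d$ with $\Gamma\cap\mathcal K=\{0\}$. Then $T$ packs by $\Gamma$, and since $T$ has measure $|T|=[\bz^d:\Gamma]$, equal to the covolume of $\Gamma$, this packing is automatically a tiling. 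The natural candidate is an $R$-invariant sublattice transverse to $\mathcal K$, found by induction on $|T|$ via the $R$-action on $\bz^d/\Gamma$. This is the step where I expect real work, possibly requiring the Lagarias--Wang structure theory of the zero set of $m_{\mathcal D}$.
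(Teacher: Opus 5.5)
Your Step 1 is correct: for a standard digit set, $T+\mathbb{Z}^d$ covers $\mathbb{R}^d$ exactly $|T|$ times, so $|T|\in\mathbb{N}$ and $\hat\chi_T$ vanishes on $\mathbb{Z}^d\setminus\{0\}$. The reduction to the primitive case is also fine, since standardness passes to $(\tilde R,\tilde{\mathcal D})$. But Step 2 is the entire content of the theorem, and you do not carry it out. You describe where you would search for $\Gamma$, not why a suitable $\Gamma$ exists. The paper gives no proof of its own here; it quotes \cite{LW97}. So what your argument establishes is the multi-tiling statement, not a lattice tiling. The multi-tiling information cannot suffice by itself. $T(4,\{0,1,8,9\})=[0,1]\cup[2,3]$ is a primitive integral self-affine tile that covers $\mathbb{R}$ exactly twice by $\mathbb{Z}$. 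Yet it tiles by no lattice: the only candidate of the right covolume is $2\mathbb{Z}$, and $T+2\mathbb{Z}$ double-covers $[0,1]+2\mathbb{Z}$. Any Step 2 must therefore use the complete-residue property in a way that goes beyond what Step 1 extracts from it.

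The one concrete mechanism you propose would also fail. Two of your suggestions force $R\Gamma\subseteq\Gamma$:
\begin{itemize}
\item taking $\Gamma^*$ generated by $\mathbb{Z}^d$ and an $R^T$-invariant (mod $\mathbb{Z}^d$) set of zeros, which gives $R^T\Gamma^*\subseteq\Gamma^*$;
\item taking $\Gamma$ to be an $R$-invariant sublattice transverse to $\mathcal K$.
\end{itemize}
Now take $R=\begin{pmatrix}2&1\\0&2\end{pmatrix}$ and $\mathcal D=\{0,3\}\times\{0,1\}$, the third self-affine example in Example~\ref{exsa}; it is standard and primitive. Since $R^{-k}(a,b)^T=2^{-k}(a-kb/2,\,b)^T$, for a.e.\ $y\in[0,1]$ the horizontal slice of $T$ at height $y$ is a segment of length $3$. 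Hence $|T|=3$, and $T$ tiles by $3\mathbb{Z}\times\mathbb{Z}$, which is not $R$-invariant because $R(0,1)^T=(1,2)^T$. Every index-$3$ sublattice of $\mathbb{Z}^2$ is the preimage of a line in $\mathbb{F}_3^2$, and $R \bmod 3$ has a single invariant line, spanned by $(1,0)$. So the only $R$-invariant candidate is $\mathbb{Z}\times 3\mathbb{Z}$, and it fails because $T\cap(T+(1,0))$ has positive measure. Your proposed search space therefore contains no valid $\Gamma$ here. Producing a tiling lattice needs a genuinely different argument, which is what \cite{LW97} supplies and what is missing from your proposal.
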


		We can construct our examples for Theorem \ref{th1} as follows: let $\Omega_0:=T(R,\mathcal D)=T$ for a standard digit set. Let $\tilde\Gamma$ be any full-rank sublattice of $\Gamma$; let $A$ be a matrix such that $A\bz^d=\tilde \Gamma$. Since $\Omega_0$ tiles with the lattice $\Gamma$, it packs with the lattice $\tilde\Gamma$. Also, by Fuglede's Theorem \ref{thf1}, $\Omega_0$ has spectrum $\Lambda_0=\Gamma^*$, the dual lattice of $\Gamma$, 
		$$\Gamma^*=\left\{\gamma^*\in\br^d :\gamma^*\cdot \gamma\in\bz\mbox{ for all }\gamma\in\Gamma	\right\}=\ati\bz^d.$$
		
		Since $\tilde\Gamma\subseteq\Gamma$ we also have $\lambda_0\cdot \tilde\gamma\in\bz$ for all $\lambda_0\in\Lambda_0=\Gamma^*$, and $\tilde \gamma\in\tilde\Gamma$.
		
		Note also that $|\det A|$ is the index of $\tilde \Gamma$ in $\bz^d$, $[\bz^d:\tilde\Gamma]$. As, $\Omega_0$ tiles $\br^d$ with $\Gamma$, using Lemma \ref{lemcg}, we get that $|\Omega_0|=[\bz^d: \Gamma]$. Thus 
		$$\frac{|\det A|}{|\Omega_0|}=\frac{[\bz^d:\tilde\Gamma]}{[\bz^d:\Gamma]}=[\Gamma:\tilde\Gamma].$$
		
		Thus, we can apply Theorem \ref{th1} and obtain that the set $\Omega=\Omega_0+\tilde\Gamma$ is spectral with pair measure 
		$$[\Gamma:\tilde\Gamma]\Leb_{\ati[-\frac12,\frac12]^d+\Gamma^*}$$
		
		Note that, if we take $\mathcal R$ to be a complete set of representatives of $\tilde\Gamma$ in $\Gamma$, then 
		$$\Omega\oplus\mathcal R=\Omega_0\oplus\tilde\Gamma\oplus\mathcal R=\Omega_0+\Gamma=\br^d,$$
		so $\Omega$ tiles $\br^d$ by $\mathcal R$. In particular, if $\Gamma=\bz^d$, then Theorem \ref{th2} can be used. 
		
		In the next figures, the first picture is the self-affine tile $\Omega_0=T(R,\mathcal D)$, the second illustrates how $\Omega_0$ tiles $\br^d$ with the lattice $\Gamma$, and the third picture shows (part of) the set $\Omega=\Omega_0+\tilde\Gamma$. Note that the tiling lattice $\Gamma$ can be different than $\bz^2$. 
		
			\begin{figure}[h]
			\centering
			
			\begin{minipage}{0.28\textwidth}
				\centering
				\includegraphics[width=\linewidth]{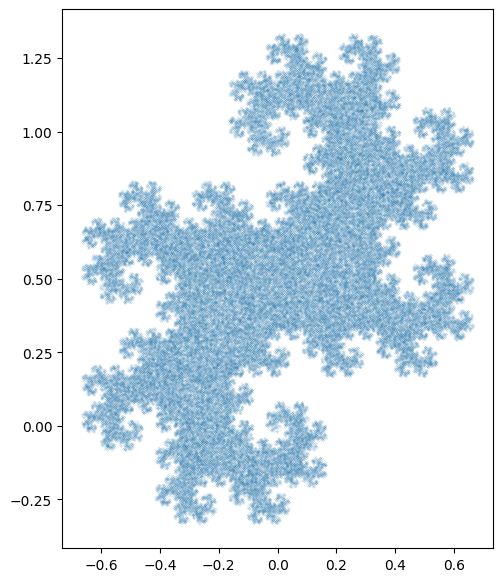}
			\end{minipage}
			\centering
			\begin{minipage}{0.28\textwidth}
				\centering
				\includegraphics[width=\linewidth]{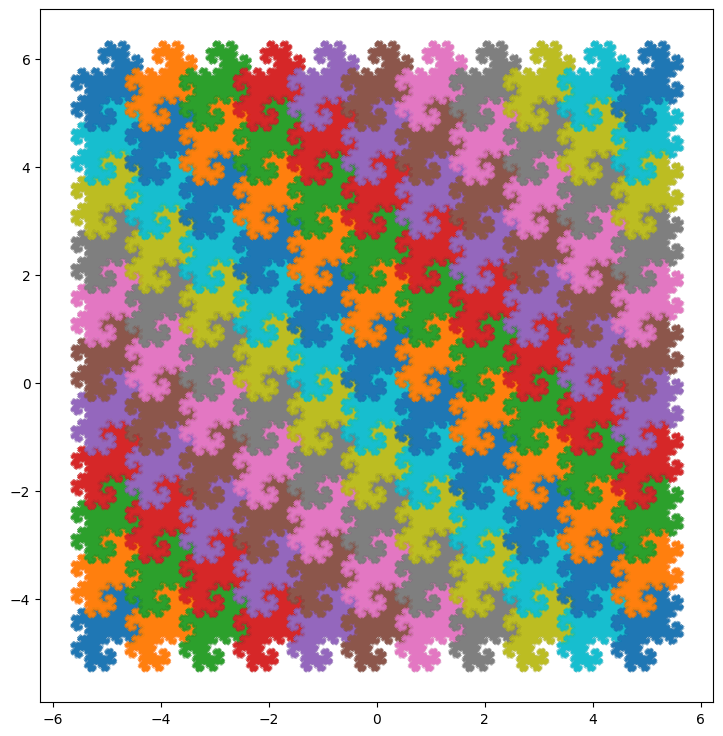}
			\end{minipage}
			\begin{minipage}{0.28\textwidth}
				\centering
				\includegraphics[width=\linewidth]{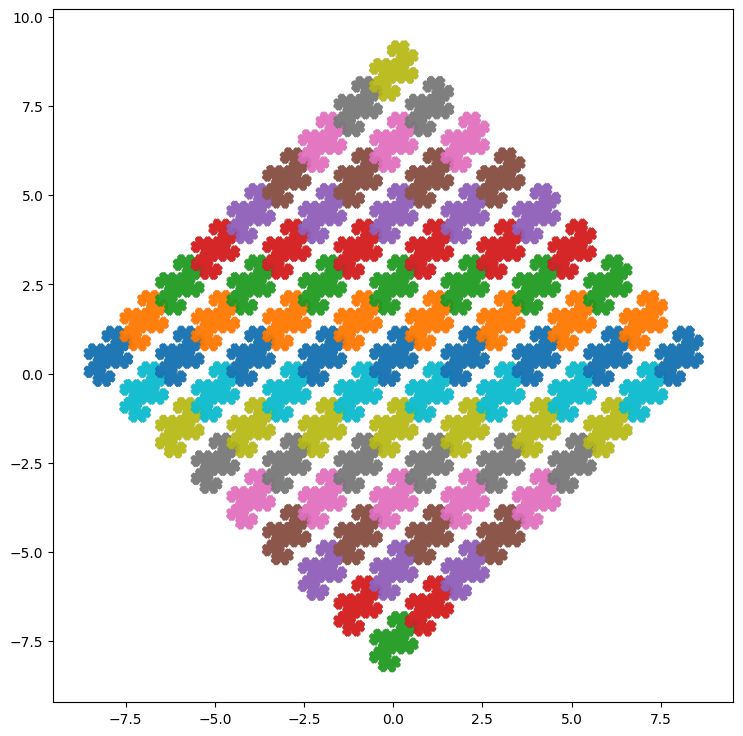}
			\end{minipage}
			
			\caption{\tiny{$
				R=
				\begin{pmatrix}
					1 & 1\\
					-1 & 1
				\end{pmatrix},
			\mathcal D=
				\{
				\binom{0}{0},
				\binom{1}{0}
				\}, \Gamma=\bz^2,\tilde \Gamma=R(\bz^2)$}}
			\label{fig:twin-dragon}
		\end{figure}

		\begin{figure}[h]
			\centering
			
			\begin{minipage}{0.28\textwidth}
				\centering
				\includegraphics[width=\linewidth]{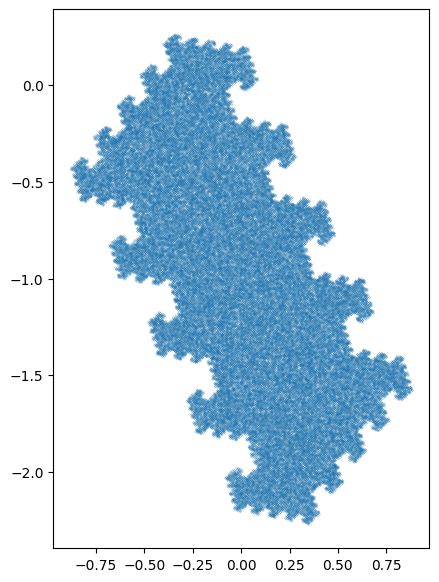}
			\end{minipage}
\centering
\begin{minipage}{0.28\textwidth}
	\centering
	\includegraphics[width=\linewidth]{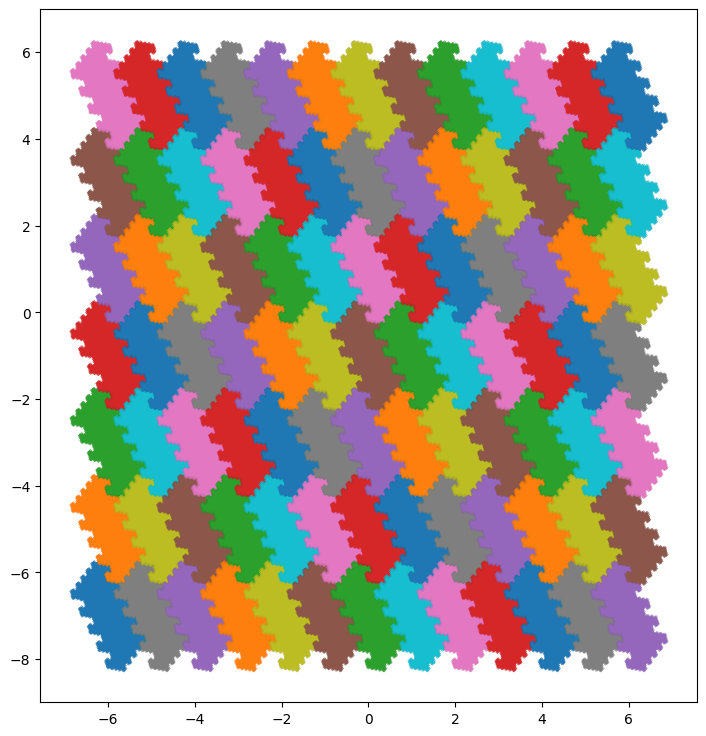}
\end{minipage}
			\begin{minipage}{0.28\textwidth}
				\centering
				\includegraphics[width=\linewidth]{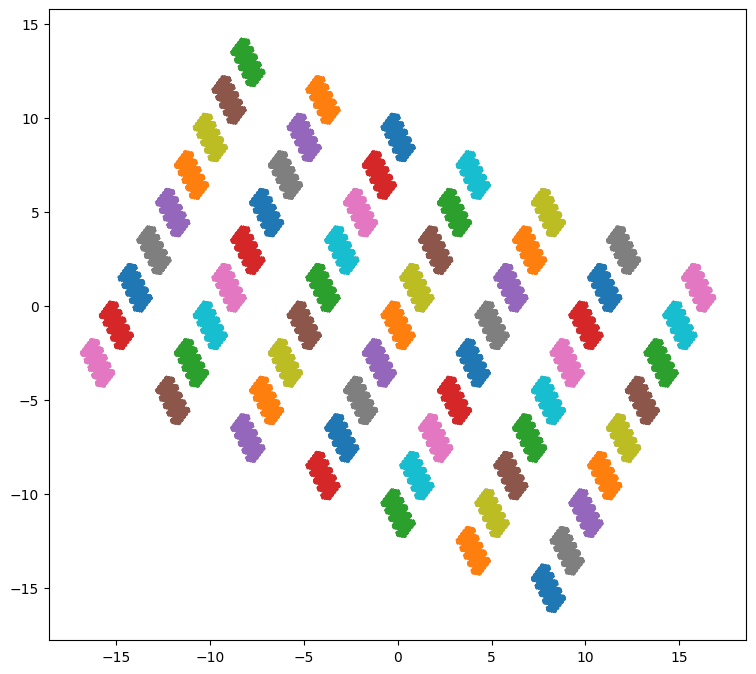}
			\end{minipage}
			
			\caption{\tiny{$
					R=
					\begin{pmatrix}
						1 & -2\\
						2 & 1
					\end{pmatrix},			
					\mathcal D=
					\{
					\binom{0}{0},
					\binom{1}{0},
					\binom{2}{0},
					\binom{3}{0},
					\binom{4}{0}
					\}, \Gamma=\bz\times 2\bz,\tilde \Gamma=R(\bz\times 2\bz)$}}
			\label{fig:measure3}
		\end{figure}

				\begin{figure}[h]
			\centering
			
			\begin{minipage}{0.28\textwidth}
				\centering
				\includegraphics[width=\linewidth]{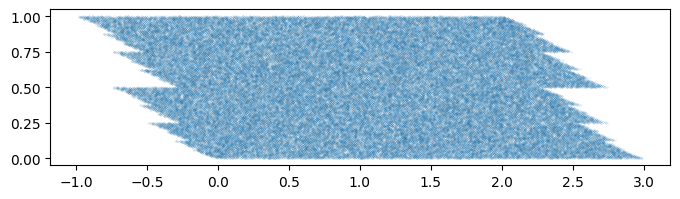}
			\end{minipage}
			\centering
			\begin{minipage}{0.28\textwidth}
				\centering
				\includegraphics[width=\linewidth]{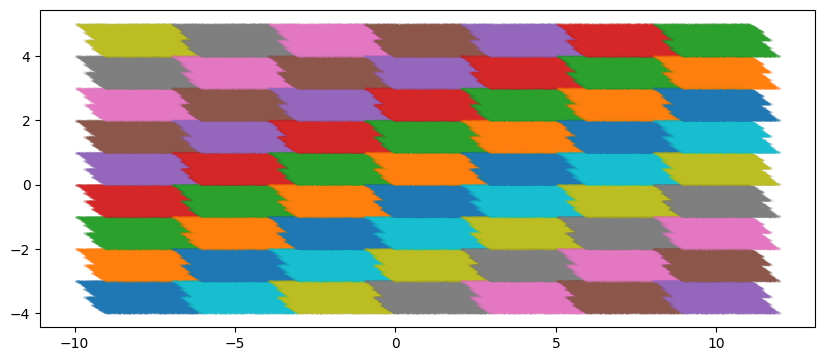}
			\end{minipage}
			\begin{minipage}{0.28\textwidth}
				\centering
				\includegraphics[width=\linewidth]{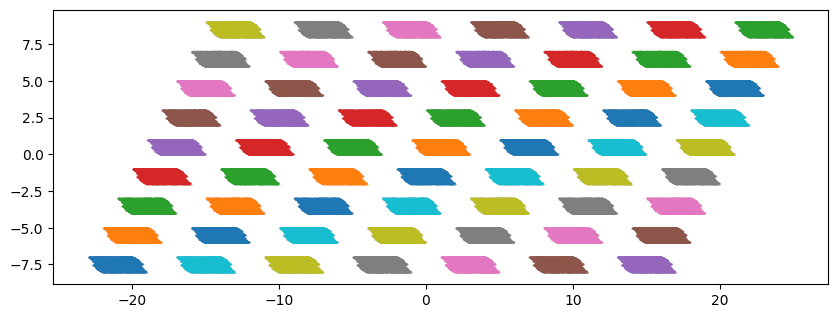}
			\end{minipage}
			
			\caption{\tiny{$
					R=
					\begin{pmatrix}
						2 & 1\\
						0 & 2
					\end{pmatrix},
					\qquad
					D=
					\{
					\binom{0}{0},
					\binom{3}{0},
					\binom{0}{1},
					\binom{3}{1}
					\}, \Gamma=3\bz\times \bz,\tilde \Gamma=R(3\bz\times \bz)$}}
			\label{fig:measure3}
		\end{figure}
		
	\end{example}
	
\FloatBarrier 
	
	
	
	
	
	
		
	\bibliographystyle{alpha}	
	\bibliography{eframes}
	
	\end{document}